\numberwithin{equation}{section}
\newtheorem{theorem}{Theorem}[section]
\newtheorem{prop}[theorem]{Proposition}
\newtheorem{lemma}[theorem]{Lemma}
\newtheorem{corollary}[theorem]{Corollary}
\theoremstyle{definition}
\newcommand{\E}{\mathbf{E}}
\newcommand{\R}{\mathbb{R}}
\newcommand{\sM}{\mathcal{M}}
\newcommand{\N}{\mathbb{N}}
\renewcommand{\P}{\mathbf{P}}
\newcommand{\Prob}[1]{\mathbf P\left\{#1\right\}}
\renewcommand{\emptyset}{\varnothing}
\newcommand{\XX}{\mathbb{X}}
\newcommand{\QQ}{\mathbb{Q}}
\newcommand{\Nb}{\mathbf{N}}
\newcommand{\ind}{\mathds{1}}
\newcommand{\bx}{{\boldsymbol{x}}}
\newcommand{\by}{{\boldsymbol{y}}}
\newcommand{\bz}{{\boldsymbol{z}}}
\newcommand{\I}{\mathfrak I}
\newcommand{\spm }[1][d]{\nu_{#1}} 
\newcommand{\bvol}{\omega_d}
\newcommand{\qtwo }{M_{\alpha}'}
\newlength{\wdth}
\newcommand{\diff}{{\mathrm d}}
\renewcommand{\prec}{\preceq}
\renewcommand{\succ}{\succeq}
\DeclareMathOperator{\Var}{Var}
\begin{document}

\title[CLT for a birth-growth model with Poisson arrivals]{Central limit
  theorem for a birth-growth model with Poisson arrivals and random
  growth speed}

\author[C.Bhattacharjee]{Chinmoy Bhattacharjee}
\address{Department of Mathematics, University of Hamburg, Bundesstrasse 55, 20146 Hamburg, Germany}
\email{chinmoy.bhattacharjee@uni-hamburg.de}

\author[I.Molchanov]{Ilya Molchanov}
\address{IMSV, University of Bern, Alpeneggstrasse 22, 3012 Bern, Switzerland}
\email{ilya.molchanov@stat.unibe.ch}

\author[R.Turin]{Riccardo Turin}
\address{Swiss Re Management
	Ltd, Mythenquai 50/60, 8022 Zurich, Switzerland}
\email{Riccardo\_Turin@swissre.com}

\date{\today}


\subjclass[2020]{Primary: 60F05, Secondary: 60D05, 60G55}
\keywords{Spatial birth growth, inhomogeneous Poisson process, Johnson--Mehl tessellation, stabilization, 
	growth frontier, exposed seeds}

\begin{abstract}
  We consider Gaussian approximation in a variant of the classical
  Johnson--Mehl birth-growth model with random growth speed. Seeds
  appear randomly in $\mathbb{R}^d$ at random times and start growing
  instantaneously in all directions with a random speed. The location,
  birth time and growth speed of the seeds are given by a Poisson
  process.  Under suitable conditions on the random growth speed, the
  time distribution and a weight function
  $h:\mathbb{R}^d \times [0,\infty) \to [0,\infty)$, we prove a
  Gaussian convergence of the sum of the weights at the exposed
  points, which are those seeds in the model that are not covered at
  the time of their birth. Such models have previously been
  considered, albeit with fixed growth speed. Moreover, using recent
  results on stabilization regions, we provide non-asymptotic bounds
  on the distance between the normalized sum of weights and a standard
  Gaussian random variable in the Wasserstein and Kolmogorov metrics.
\end{abstract}

\maketitle

\section{Introduction}

In the spatial Johnson--Mehl growth model, seeds arrive at random
times $t_i$, $i \in \N$, at random locations $x_i$, $i \in \N$, in
$\R^d$, according to a Poisson process $(x_i,t_i)_{i \in \N}$ on
$\R^d \times \R_+$, where $\R_+:=[0,\infty)$. Once a seed is born at
time $t$, it begins to form a cell by growing radially in all
directions at a constant speed $v\geq 0$, so that by time $t'$ it
occupies the ball of radius $v(t'-t)$. The parts of the space claimed
by the seeds form the so-called Johnson--Mehl tessellation, see
\cite{cskm} and \cite{obsc}. This is a generalization of the classical
Voronoi tessellation, which is obtained if all births occur
simultaneously at time zero.

The study of such birth-growth processes started with the work of
Kolmogorov~\cite{Kol37} in two dimensions to model crystal
growth. Since then, this model has seen applications in various
contexts such as phase transition kinetics, polymers, ecological
systems and DNA replications to name a few, see \cite{BR08,cskm,obsc}
and references therein. A central limit theorem for the Johnson--Mehl
model with inhomogeneous arrivals of the seeds was obtained in
\cite{CQ97}.

Variants of the classical spatial birth-growth model can be found,
sometimes as a particular case of other models, in many papers. Among
them, we mention \cite{PY02} and \cite{BY05}, where the birth-growth
model appears as a particular case of a random sequential packing
model, and \cite{SY08}, who have studied a variant of the model with
non-uniform deterministic growth patterns.  The main tools rely on the
concept of stabilization by considering regions where the appearance
of new seeds influences the functional of interest.

In this paper, we consider a generalization of the Johnson--Mehl model
by introducing random growth speeds for the seeds. This gives rise to
many interesting features in the model, most importantly, long-range
interactions if the speed can take arbitrarily large values with
positive probability. Therefore, the model with random speed is no
longer stabilizing in the classical sense of \cite{pen:yuk03} and
\cite{LSY19}, since distant points may influence the growth pattern if
their speeds are sufficiently high. It should be noted that, even in
the constant speed setting, we substantially improve and extend limit
theorems obtained in \cite{CQ97}.

We consider a birth-growth model, determined by a Poisson process
$\eta$ in $\XX:=\R^d\times\R_+\times\R_+$ with intensity measure
$\mu:=\lambda\otimes\theta\otimes\nu$, where $\lambda$ is the Lebesgue
measure on $\R^d$, $\theta$ is a non-null locally finite measure on
$\R_+$, and $\nu$ is a probability distribution on $\R_+$ with
$\nu(\{0\})<1$.  Each point $\bx$ of this point process $\eta$ has
three components $(x,t_x,v_x)$, where $v_x \in \R_+$ denotes the
random speed of a seed born at location $x \in \R^d$ and whose growth
commences at time $t_x\in \R_+$. In a given point configuration, a
point $\bx:=(x,t_x,v_x)$ is said to be \textit{exposed} if there is no other
point $(y,t_y,v_y)$ in the configuration with $t_y <t_x$ and
$\|x-y\| \le v_y(t_x-t_y)$, where $\|\cdot\|$ denotes the Euclidean
norm. Notice that the event that a
point $(x,t_x,v_x)\in \eta$ is exposed depends only on the point
configuration in the region
\begin{equation}
  \label{eq:3}
  L_{x,t_x}:=\big\{(y,t_y,v_y)\in \XX: \|x-y\| \le v_y(t_x-t_y)\big\}.
\end{equation}
Namely, $\bx$ is exposed if and only if $\eta$ has no points (apart
from $\bx$) in $L_{x,t_x}$.

The \textit{growth frontier} of the model can be defined as the random field
\begin{equation}\label{eq:grfr}
  \min_{(x,t_x,v_x)\in\eta} \Big(t_x+\frac{1}{v_x}\|y-x\|\Big),
  \quad y\in\R^d.
\end{equation}
This is an example of an extremal shot-noise process, see
\cite{hein:mol94}.  Its value at a point $y \in \R^d$ corresponds to a
seed from $\eta$ whose growth region covers $y$ first. It should be
noted here that this covering seed need not be an exposed one. In
other words, because of random speeds, it may happen that the cell
grown from a non-exposed seed shades a subsequent seed which would be
exposed otherwise. This excludes possible applications of our model
with random growth speed to crystallisation, where a more natural
model would be to not allow a non-exposed seed to affect any future
seeds. But this creates a causal chain of influences that seems quite
difficult to study with the currently known methods of stabilization
for Gaussian approximation. 

Nonetheless, models such as ours are natural in telecommunication
applications, where the speed plays the role of the weight or strength
of a particular transmission node, where the growth frontier defined
above can be used as a variant of the additive signal-to-interference
model from \cite[Chapter~5]{baccelli10}. Furthermore, similar models
can be applied in the ecological or epidemiological context, where a
non-visible event influences appearances of others. Suppose we have a
barren land and a drone/machine is planting seeds from a mixture of
plant species at random times and random locations for
reforestation. Each seed, after falling on the ground, starts growing
a bush around it at a random speed depending on its species. If a new
seed falls on a part of the ground that is already covered in bushes,
it is still allowed to form its own bush, i.e., there is no
exclusion. Now the number of exposed points in our model above
translates to the number of seeds that start a bush on a then barren
land, rather than starting on a ground already covered in bushes. This
in some sense, can explain the efficiency of the reforestation
process, i.e., what fraction of the seeds were planted on barren land,
in contrast to planting them on an already existing bush.

Given a measurable weight function $h:\R^d \times \R_+ \to \R_+$ the
main object of interest in this paper is the sum of $h$ over the space-time coordinates $(x,t_x)$ of the 
exposed points in $\eta$. These can be defined as those points $(y,t_y)$ where the growth frontier defined at \eqref{eq:grfr} has a local minimum (See Section \ref{sec:model-main-results} for a precise definition). Our aim is to provide sufficient
conditions for Gaussian convergence of such sums.  A standard approach
for proving Gaussian convergence for such statistics relies on
stabilization theory \cite{BY05, ERS15, PY02, SY08}. While in the
stabilization literature, one commonly assumes that the so-called
stabilization region is a ball around a given reference point,
the region $L_{x,t_x}$ is unbounded and it seems that it is not
expressible as a ball around $\bx$ in some different metric.
Moreover, our stabilization region is set to be empty if $\bx$
is not exposed. 

The main challenge when working with random unbounded speed of growth
is that there are possibly very long-range interactions between
seeds. This makes the use of balls as stabilization regions vastly
suboptimal and necessitates the use of regions of a more general
shape. In particular, we only assume that the random growth speed in
our model has finite moment of order $7d$ (see assumption (C) in
Section~\ref{sec:model-main-results}), and this allows for some
power-tailed distributions for the speed.

The recent work \cite{bha:mol2021} introduced a new notion of
\emph{region-stabilization} which allows for more general regions than
balls, and, building on the seminal work \cite{LPS16}, provides bounds
on the rate of Gaussian convergence for certain sums of
region-stabilizing score functions. We will utilize this to derive
bounds on the Wasserstein and Kolmogorov distances, defined below,
between a suitably normalized sum of weights and the standard Gaussian
distribution. For real-valued random variables $X$ and $Y$, the
\emph{Wasserstein distance} between their distributions is given by
$$
d_{\mathrm{W}}(X,Y):= \sup_{f \in \operatorname{Lip}_1} |\E\; f(X) - \E \; f(Y)|,
$$
where $\operatorname{Lip}_1$ denotes the class of all Lipschitz
functions $f: \R \to \R$ with Lipschitz constant at most one. The
\emph{Kolmogorov distance} between the distributions is given by
$$
d_{\mathrm{K}}(X,Y):= \sup_{t \in \R} |\Prob{X \le t} - \Prob{Y \le t}|.
$$

The rest of the paper is organized as follows. In
Section~\ref{sec:model-main-results}, we describe the model and state
our main results. In Section~\ref{sec:variance-estimation}, we prove a
result providing necessary upper and lower bounds for the variance of
our statistic of interest.  Section~\ref{sec:proof-theor-refthm:b}
presents the proofs of our quantitative bounds.

\section{Model and main results}
\label{sec:model-main-results}

Recall that we work in the space $\XX:=\R^d\times\R_+\times\R_+$,
$d \in \N$, with the Borel $\sigma$-algebra. The points
from $\XX$ are written as $\bx:=(x,t_x,v_x)$, so that $\bx$ designates
a seed born in position $x$ at time $t_x$, which then grows radially
in all directions with speed $v_x$. For $\bx\in \XX$, the set
$$
G_{\bx}=G_{x,t_x,v_x}:=\big\{(y,t_y) \in \R^d \times \R_+ :
t_y \ge t_x, \|y-x\| \le v_x (t_y-t_x)\big\}
$$
is the growth region of the seed $\bx$. Denote by $\Nb$ the family of
$\sigma$-finite counting measures $\sM$ on $\XX$ equipped with the
smallest $\sigma$-algebra $\mathscr{N}$ such that the maps
$\sM \mapsto \sM(A)$ are measurable for all Borel $A$.  We write
$\bx\in\sM$ if $\sM(\{\bx\})\geq 1$. For $\sM \in \Nb$, a point
$\bx \in \sM$ is said to be \emph{exposed} in $\sM$ if it does not
belong to the growth region of any other point
$\by\in\sM$, $\by \neq \bx$. Note that the property of being exposed is not
influenced by the speed component of $\bx$.

The \emph{influence set} $L_\bx=L_{x,t_x}$, $\bx \in \XX$, defined at
\eqref{eq:3}, is exactly the set of points that were born before time
$t_x$ and which at time $t_x$ occupy a region that covers the location
$x$, thereby shading it. Note that $\by\in L_\bx$ if and only if
$\bx\in G_\by$. Clearly, a point
$\bx\in\sM$ is exposed in $\sM$ if and only if
$\sM (L_\bx\setminus\{x\})=0$.
We write $(y,t_y,v_y)\preceq (x,t_x)$ or $\by \preceq \bx$ if
$\by\in L_{x,t_x}$ (recall that the speed component of $\bx$ is
irrelevant in such a relation) and so $\bx$ is not an exposed point
with respect to $\delta_{\by}$, where $\delta_{\by}$ denotes the Dirac
measure at $\by$.

For $\sM \in \Nb$ and $\bx \in \sM$, denote
\begin{displaymath}
  H_\bx(\sM)\equiv H_{x,t_x}(\sM)
  :=\ind\{\bx \text{ is exposed in $\sM$}\}
  =\ind_{\sM(L_{x,t_x} \setminus \{\bx\})=0}.
\end{displaymath}
A generic way to construct an additive functional on the exposed
points is to consider the sum of weights of these points, where each
exposed point $\bx$ contributes a weight $h(\bx)$ for some measurable
$h: \XX \to \R_+$.  In the following we consider weight functions
$h(\bx)$ which are products of two measurable functions
$h_1:\R^d \to \R_+$ and $h_2: \R_+ \to \R_+$ of the locations and
birth times, respectively, of the exposed points.  In particular, we
let $h_1(x) = \ind_W(x)=\ind\{x \in W\}$ for a window
$W \subset \R^d$, and $h_2(t) = \ind\{t \le a\}$ for
$a \in (0,\infty)$. Then
\begin{equation}
  \label{eq:F}
  F(\sM)
  :=\int_\XX h_1(x) h_2(t_x)H_{\bx}(\sM)\sM(\diff \bx)
  =\sum_{\bx\in\sM} \ind_{x \in W}\ind_{t_x \le a} H_{\bx}(\sM)
\end{equation}
is the number of exposed points from $\sM$ located in $W$ and
born before time $a$. Note here that when we add a new point $\by=(y,t_y,v_y) \in \R^d \times \R_+ \times \R_+$ to a configuration $\sM \in \Nb$ not containing it, the change in the value of $F$ is not a function of only $\by$ and some local neighbourhood of it, but rather it depends on points in the configuration that might be very far away. Indeed, we have for $\by \notin \sM$,
$$
F(\sM + \delta_{\by}) - F(\sM)=  \ind_{y \in W}\ind_{t_y \le a} H_{\by}(\sM + \delta_{\by}) - \sum_{\bx\in\sM} \ind_{x \in W}\ind_{t_x \le a} \mathds{1}_{\bx \in L_{(y,t_y)}},
$$
that is, $F$ may increase by one when $\by$ is exposed in $\sM + \delta_{\by}$, while simultaneously, any point $\bx\in\sM$ which was previously exposed in $\sM$ may not be so anymore after adding $\by$, if it happens to fall in the influence set $L_{(y,t_y)}$ of $\by$. This necessitates the use of region-stabilization.

Recall that $\eta$ is a \emph{Poisson process} in $\XX$ with intensity
measure $\mu$, being the product of the Lebesgue measure $\lambda$ on
$\R^d$, a non-null locally finite measure $\theta$ on $\R_+$, and
a probability measure $\nu$ on $\R_+$ with $\nu(\{0\})<1$. Note
that $\eta$ is a simple random counting measure.  The main goal of
this paper is to find sufficient conditions for a Gaussian convergence
of $F\equiv F(\eta)$ as defined at \eqref{eq:F}.
The functional $F(\eta)$ is a region-stabilizing functional, in the
sense of \cite{bha:mol2021}, and can be represented as
$F(\eta)=\sum_{\bx \in \eta} \xi(\bx, \eta)$, where the score function
$\xi$ is given by
\begin{equation}
  \label{eq:4}
  \xi(\bx,\sM):=\ind_{x \in W}\ind_{t_x \le a}
  H_{\bx}(\sM), \quad \bx \in \sM,
\end{equation}
with the region of stabilization being $L_{x,t_x}$ when $\bx$ is an exposed point (See Section \ref{sec:proof-theor-refthm:b} for more details). As a convention, let $\xi(\bx,\sM)=0$ if $\sM=0$ or if
$x\notin\sM$. Theorem~2.1 in \cite{bha:mol2021} yields ready-to-use
bounds on the Wasserstein and Kolmogorov distances between $F$,
suitably normalized, and a standard Gaussian random variable $N$ upon
validating equation (2.1) and conditions (A1) and (A2) therein. We
consistently follow the notation of \cite{bha:mol2021}.

Now we are ready to state our main results.  First, we list the
necessary assumptions on our model.  In the sequel, we drop $\lambda$
in Lebesgue integrals and simply write $\diff x$ instead of
$\lambda(\diff x)$.
\begin{enumerate}[(A)]
\item 
The window $W$ is compact convex with nonempty interior.
\item For all $x>0$, 
  \begin{equation*}
    \int_0^\infty e^{-x \Lambda(t)} \, \theta(\diff t)<\infty,
  \end{equation*}
  where 
  \begin{equation}
    \label{eq:7}
    \Lambda(t):=\bvol\int_0^t (t-s)^d \theta(\diff s)
  \end{equation}
 and  $\bvol$ is the volume of the $d$-dimensional unit Euclidean ball.
  \smallbreak 
\item The moment of $\nu$ of order
  $7d$ is finite, i.e., $\spm[7d]<\infty$, where 
  \begin{displaymath}
    \spm[u]:=\int_0^\infty v^u \nu(\diff v),\quad u\geq 0.
  \end{displaymath}
\end{enumerate}
 
Note that the function $\Lambda(t)$ given at \eqref{eq:7} is, up to a
constant, the measure of the influence set of any point $\bx \in \XX$
with time component $t_x=t$ (the measure of the influence set does not
depend on the location and speed components of $\bx$). Indeed, the
$\mu$-content of $L_{x,t_x}$ is given by
\begin{align*}
  \mu(L_{x,t_x})
  &=\int_{0}^\infty \int_0^{t_x}
    \int_{\R^d} \ind_{y \in B_{v_y(t_x-t_y)}(x)}
    \,\diff y\,\theta(\diff t_y)\nu(\diff v_y)\\
  & =\int_0^\infty \nu(\diff v_y) \int_0^{t_x}
    \bvol v_y^d(t_x-t_y)^d
    \theta(\diff t_y)=\spm  \Lambda(t_x),
\end{align*}
where $B_r(x)$ denotes the closed
$d$-dimensional Euclidean ball of radius $r$ centered at $x \in
\R^d$. In particular, if $\theta$ is the Lebesgue measure on
$\R_+$, then $\Lambda(t)=\bvol\,t^{d+1}/(d+1)$.
 


The following theorem is our first main result. We denote by
$(V_j(W))_{0 \le j \le d }$ the intrinsic volumes of $W$ (see
\cite[Section~4.1]{schn2}), and let
\begin{equation}\label{eq:V}
  V(W):=\max_{0 \le j \le d} V_j(W).
\end{equation}

\begin{theorem}\label{thm:birth}
  Let $\eta$ be a Poisson process on $\XX$ with intensity measure
  $\mu$ as above, such that
  the assumptions (A)--(C) hold.
  Then, for $F:=F(\eta)$ as in \eqref{eq:F} with $a \in (0,\infty)$,
  \begin{displaymath}
    d_{\mathrm{W}}\left(\frac{F - \E F}{\sqrt{\Var
          F}},N\right) \le C \Bigg[\frac{\sqrt{V(W)}}{\Var F}
    +\frac{V(W)}{(\Var F)^{3/2}}\Bigg],
  \end{displaymath}
  and
  \begin{displaymath}
    d_{\mathrm{K}}\left(\frac{F - \E F}{\sqrt{\Var
          F}},N\right) \le
    C \Bigg[\frac{\sqrt{V(W)}}{\Var F}\\
    +\frac{V(W)}{(\Var F)^{3/2}}
    +\frac{V(W)^{5/4}
      + V(W)^{3/2}}{(\Var F)^{2}}\Bigg]
  \end{displaymath}
  for a constant $C \in (0,\infty)$ which depends on $a$, $d$, the first $7d$ moments of
  $\nu$, and $\theta$.
\end{theorem}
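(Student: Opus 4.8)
The plan is to apply Theorem~2.1 of \cite{bha:mol2021} to the score representation $F(\eta)=\sum_{\bx\in\eta}\xi(\bx,\eta)$ with $\xi$ as in \eqref{eq:4}. Since $\xi$ is region-stabilizing, the whole argument reduces to exhibiting an explicit stabilization region for each point and then verifying the three hypotheses of that theorem, namely the region-stabilization property encoded in equation~(2.1) together with the moment conditions (A1) and (A2). For $\bx=(x,t_x,v_x)\in\XX$ I would take the stabilization region to be
\begin{displaymath}
  R_{\bx}:=
  \begin{cases}
    L_{x,t_x}, & \eta(L_{x,t_x}\setminus\{\bx\})=0,\\
    \emptyset, & \text{otherwise,}
  \end{cases}
\end{displaymath}
that is, the influence set $L_{x,t_x}$ of \eqref{eq:3} when $\bx$ is exposed and the empty set when it is not. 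Shrinking the region to $\emptyset$ in the non-exposed case, as anticipated in the introduction, yields much smaller region sizes and hence sharper bounds; this is legitimate precisely because once $\bx$ is shaded by some point of $\eta$ it remains shaded after further points are added.

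First I would verify the stabilization identity of equation~(2.1). Because $H_{\bx}(\sM)$ depends on $\sM$ only through $\sM(L_{x,t_x}\setminus\{\bx\})$, adding any finite configuration $\mathcal A\subset\XX\setminus L_{x,t_x}$ to $\eta$ leaves the score unchanged, while when $\bx$ is not exposed adding points anywhere keeps the score equal to $0$; in both cases $\xi(\bx,(\eta\cup\{\bx\})\cup\mathcal A)=\xi(\bx,\eta\cup\{\bx\})$ whenever $\mathcal A\cap R_{\bx}=\emptyset$. Moreover, whether $R_{\bx}$ equals $L_{x,t_x}$ or $\emptyset$ is determined by $\eta$ restricted to $L_{x,t_x}$, so $R_{\bx}$ is a stopping set in the required measurable sense. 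Condition (A1), a uniform moment bound on the scores and their add-one costs, is then immediate: the scores take values in $\{0,1\}$, so $\xi$ and all of its first- and second-order difference operators are bounded by $1$ in absolute value, and every moment appearing in (A1) is at most $1$.

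The substance of the proof is condition (A2), which controls the probability that a far-away point lies in the stabilization region, and thereby the moments of $\mu(R_{\bx})$ and of $\eta(R_{\bx})$. Here the long-range interactions bite: a point $\by=(y,t_y,v_y)$ with large speed $v_y$ can belong to $L_{x,t_x}$ even when $y$ is far from $x$, so the region has no bounded radius. The starting point is the identity $\mu(L_{x,t_x})=\spm\,\Lambda(t_x)$ already recorded in the text, which gives the void probability $\Prob{\eta(L_{x,t_x}\setminus\{\bx\})=0}=e^{-\spm\Lambda(t_x)}$; by assumption (B) this is integrable in $t_x$ against $\theta$, so expected counts of exposed points are finite. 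To bound the higher moments of $\mu(R_{\bx})$ and $\eta(R_{\bx})$ that enter (A2), I would integrate out the speed: the probability that a point at spatial distance $r$ from $x$ and birth time $t_y<t_x$ lies in $L_{x,t_x}$ is $\nu[r/(t_x-t_y),\infty)$, and controlling the products of such region-membership indicators that appear in the second- and higher-order terms of Theorem~2.1 requires moments of $\nu$ up to order $7d$. This is exactly the calibration behind assumption~(C), and establishing these region estimates uniformly is the main obstacle.

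Finally, with (2.1), (A1) and (A2) in hand, Theorem~2.1 of \cite{bha:mol2021} delivers Wasserstein and Kolmogorov bounds whose numerators are integrals over $\bx$ of moments of scores weighted by region sizes. Since the score forces $x\in W$ and $t_x\le a$, the outer integration runs over $\bx$ with $x\in W$, and after the region estimates of the previous step the integrand is bounded by a constant depending only on $a$, $d$, $\theta$ and the first $7d$ moments of $\nu$. The remaining geometric task is to bound these $\bx$-integrals by a multiple of $V(W)=\max_{0\le j\le d}V_j(W)$: the leading term contributes $\lambda(W)=V_d(W)$, while the contributions coming from points near $\partial W$ whose regions protrude outside $W$ are controlled by the lower-order intrinsic volumes through a Steiner-type tube estimate, so that each numerator is dominated by the appropriate power of $V(W)$. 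Collecting the resulting terms reproduces the two displayed bounds, with the extra $V(W)^{5/4}$ and $V(W)^{3/2}$ contributions in the Kolmogorov estimate arising from the higher-order terms of Theorem~2.1.
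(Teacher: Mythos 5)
Your high-level strategy is exactly the paper's: the same stabilization region (the influence set $L_{x,t_x}$ when $\bx$ is exposed, $\emptyset$ otherwise), the same appeal to Theorem~2.1 of \cite{bha:mol2021}, and the same idea of converting the geometry into intrinsic volumes via a Steiner-type estimate to obtain the $V(W)$ factors. The qualitative checks you sketch (the consistency property (2.1), measurability and monotonicity of the region, and the moment condition for the $\{0,1\}$-valued scores) are correct in substance, apart from a swap of what \cite{bha:mol2021} calls (A1) and (A2): there, (A1) is the set of structural conditions on the region and (A2) is the score-moment bound, which is indeed trivial here.

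However, there is a genuine gap, and you name it yourself when you write that establishing the region estimates ``is the main obstacle.'' First, the quantities that Theorem~2.1 of \cite{bha:mol2021} asks you to control are not moments of $\mu(R_\bx)$ or $\eta(R_\bx)$, as your proposal suggests; they are the integrals $\mu f_\beta^2$, $\mu f_{2\beta}$ and $\mu\big((\kappa+g)^{2\beta}G\big)$ appearing in \eqref{eq:Wass} and \eqref{eq:Kol}, where $f_\alpha=f_\alpha^{(1)}+f_\alpha^{(2)}+f_\alpha^{(3)}$ is built not only from the one-point membership probability $e^{-r(\bx,\by)}$ but also from the two-point function $q(\bx,\by)$ of \eqref{eq:q}, i.e.\ the $\mu$-integral of the probability that \emph{both} $\bx$ and $\by$ lie in the region of a third point. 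Bounding these is the actual substance of the proof (Lemmas~\ref{lem:1}--\ref{lem:4}, together with Lemma~\ref{lemma:psi}): one needs (i) the finiteness of $Q(x,y)=\int_0^\infty t^x e^{-y\Lambda(t)}\theta(\diff t)$, which is where assumption (B) enters nontrivially; (ii) the pointwise bound $G(\by)\le 6\bvol^5\ind_{y\in W+B_{v_ya}(0)}\,p(v_y)$ with $p(v)=1+Q(d,\zeta\spm)^5v^{5d}$, which is where the exponent $5d$ arises and which, combined with the extra $v^d$ and $v^{2d}$ factors coming from the kernels, is precisely what calibrates assumption (C) at order $7d$; and (iii) the treatment of $f_\alpha^{(3)}$, which requires the observation that $\bx,\by\preceq\bz$ forces $\|x-y\|\le t_z(v_x+v_y)$ and the cutoff $r_0=\frac{\|x-y\|}{v_x+v_y}\vee t_x\vee t_y$ to decouple the multiple integrals. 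None of this is carried out in your proposal, and it cannot be waved through: without these computations one cannot even conclude that the right-hand sides of the bounds in Theorem~2.1 of \cite{bha:mol2021} are finite, let alone that they scale as $\sqrt{V(W)}$, $V(W)$, $V(W)^{5/4}$ and $V(W)^{3/2}$, nor that the resulting constant depends on $\nu$ only through its first $7d$ moments, which is what the statement of Theorem~\ref{thm:birth} asserts.
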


To derive a quantitative central limit theorem from
Theorem~\ref{thm:birth}, a lower bound on the variance is needed.  The
following proposition provides general lower and upper bounds on the
variance, which are then specialized for measures on
$\R_+$ given by 
\begin{equation}
  \label{eq:10}
  \theta(\diff t):=t^\tau \diff t, \quad \tau\in(-1,\infty).
\end{equation}
In the following, $t_1 \wedge t_2$ denotes
$\min\{t_1,t_2\}$ for $t_1, t_2 \in \R$.
For $a \in (0,\infty)$ and $\tau>-1$, define the function 
\begin{equation}\label{eq:la}
  l_{a,\tau}(x):=\gamma\left(\frac{\tau+1}{d+\tau+1},a^{d+\tau+1}x\right)
  x^{-(\tau+1)/(d+\tau+1)}, \quad x>0,
\end{equation}
where $\gamma(p,z):=\int_0^z t^{p-1}e^{-t}\diff t $ is the lower incomplete Gamma function.

\begin{prop}
  \label{prop:var}
  Let the assumptions (A)--(C) be in force.  For a Poisson process
  $\eta$ with intensity measure $\mu$ as above and $F:=F(\eta)$ as in
  \eqref{eq:F},
  \begin{equation}
    \label{eq:9}
    \frac{\Var(F)}{\lambda(W)} \ge 
    \Bigg[\int_{0}^a w(t) \theta(\diff t)
    - 2 \bvol \spm  \int_0^a \int_0^{t} (t-s)^d w(s) w(t)
    \theta(\diff s) \theta(\diff t)\Bigg]
  \end{equation}
  and
  \begin{multline}
  \label{eq:vub}
    \frac{\Var(F)}{\lambda(W)} \le 
    \Bigg[2 \int_{0}^a w(t)^{1/2} \theta(\diff t)\\
    + \bvol^2 \nu_{2d}  \int_{[0,a]^2} \int_{0}^{t_1 \wedge t_2}
    (t_1-s)^d (t_2-s)^d w(t_1)^{1/2} w(t_2)^{1/2}  \theta(\diff s)
    \theta^2(\diff(t_1,t_2))\Bigg],
  \end{multline}
  where
  \begin{equation}
    \label{eq:6}
    w(t):=e^{-\spm  \Lambda(t)}
    =\E\left[H_{0,t}(\eta)\right].
  \end{equation}
  If $\theta$ is given by
  \eqref{eq:10}, then
  \begin{equation}\label{eq:vgam}
    C_1  (d-1-\tau) <C_1'\leq
    \frac{\Var(F)}{\lambda(W) l_{a,\tau}(\nu_d)}\leq
    C_2 (1+\nu_{2d} \nu_d^{-2})
  \end{equation}
  for constants $C_1, C_1', C_2$ depending on the dimension $d$ and
  $\tau$, and $C_1,C_2>0$.
\end{prop}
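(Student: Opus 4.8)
The plan is to derive both bounds from the standard variance machinery for Poisson functionals and then to specialize. Throughout I write $g(\bx):=\ind_{x\in W}\ind_{t_x\le a}$, so $F(\eta)=\sum_{\bx\in\eta}g(\bx)H_\bx(\eta)$, and I recall $w(t)=\E[H_{0,t}(\eta)]=e^{-\nu_d\Lambda(t)}=\P\{\eta(L_{x,t})=0\}$ together with $\mu(L_\bx)=\nu_d\Lambda(t_x)$. The add-one-point difference operator $D_\bx F:=F(\eta+\delta_\bx)-F(\eta)$ decomposes as $D_\bx F=A_\bx-B_\bx$, where $A_\bx:=g(\bx)\ind\{\eta(L_\bx)=0\}$ is the contribution of the inserted exposed seed $\bx$ and $B_\bx:=\sum_{\by\in\eta}g(\by)H_\by(\eta)\ind\{\bx\in L_\by\}\ge 0$ counts the previously exposed seeds whose exposure $\bx$ destroys; this is immediate, since inserting $\bx$ can only unexpose a seed $\by$ with $\bx\in L_\by$, and $\bx$ itself is exposed exactly when $\eta(L_\bx)=0$.

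For the lower bound \eqref{eq:9} I would avoid the Poincaré inequality and compute the variance exactly. Splitting $F^2$ into its diagonal and off-diagonal parts and applying the one- and two-point Mecke equations gives
\begin{displaymath}
  \Var(F)=\E F+\int_\XX\int_\XX g(\bx)g(\by)\,w(t_x)w(t_y)\Big[\ind\{\by\notin L_\bx,\,\bx\notin L_\by\}\,e^{\mu(L_\bx\cap L_\by)}-1\Big]\,\mu(\diff\bx)\,\mu(\diff\by),
\end{displaymath}
where I used $\P\{\eta(L_\bx\cup L_\by)=0\}=w(t_x)w(t_y)e^{\mu(L_\bx\cap L_\by)}$. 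Since $\{\by\in L_\bx\}$ and $\{\bx\in L_\by\}$ are disjoint up to a null set, the bracket is bounded below by $-\ind\{\by\in L_\bx\}-\ind\{\bx\in L_\by\}$, and this is sharp precisely when one seed shades the other. Inserting this, using symmetry in $(\bx,\by)$ and the elementary estimate $\lambda(W\cap B_r(x))\le\bvol r^d$ to carry out the spatial integration, and recalling $\E F=\lambda(W)\int_0^a w(t)\theta(\diff t)$, reproduces \eqref{eq:9} exactly.

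For the upper bound \eqref{eq:vub} I would invoke the Poincaré inequality $\Var(F)\le\int_\XX\E[(D_\bx F)^2]\,\mu(\diff\bx)$ together with $(D_\bx F)^2\le A_\bx^2+B_\bx^2$, valid because $A_\bx,B_\bx\ge 0$. Here $\int\E[A_\bx^2]\,\mu(\diff\bx)=\lambda(W)\int_0^a w(t)\theta(\diff t)$ combines with the diagonal part of $\int\E[B_\bx^2]\,\mu(\diff\bx)=\lambda(W)\int_0^a\nu_d\Lambda(t)w(t)\theta(\diff t)$ (obtained from Mecke and $\int\ind\{\bx\in L_\by\}\,\mu(\diff\bx)=\mu(L_\by)$), and the inequality $(1+u)e^{-u}\le 2e^{-u/2}$ with $u=\nu_d\Lambda(t)$ bounds their sum by the first term of \eqref{eq:vub}. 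The off-diagonal part of $\E[B_\bx^2]$ yields, via the two-point Mecke equation and the bound $\E[H_{\by_1}H_{\by_2}]\le e^{-\mu(L_{\by_1}\cup L_{\by_2})}\le w(t_1)^{1/2}w(t_2)^{1/2}$ (using $\mu(L_{\by_1}\cup L_{\by_2})\ge\tfrac12(\mu(L_{\by_1})+\mu(L_{\by_2}))$), a spatial integral over the common shading point $\bx$ (time $s$, speed $v_x$) and the two shaded seeds $\by_1,\by_2$ (times $t_1,t_2$). The decisive manoeuvre is to integrate $y_2$ over $W\cap B_{v_x(t_2-s)}(x)$, then $x$ over $B_{v_x(t_1-s)}(y_1)$, and finally $y_1$ over $W$: this order factorizes the geometry into two ball volumes $\bvol (v_x(t_i-s))^d$ and a single $\lambda(W)$, and integrating $v_x^{2d}$ against $\nu$ supplies $\nu_{2d}$, giving the second term of \eqref{eq:vub}.

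Finally, for \eqref{eq:vgam} I would substitute $\theta(\diff t)=t^\tau\diff t$, so that $\Lambda(t)=c_{d,\tau}\,t^{d+\tau+1}$ for an explicit constant $c_{d,\tau}>0$, and the change of variables $u=\nu_d\Lambda(t)$ turns every integral above into a lower incomplete Gamma integral of parameter $p=(\tau+1)/(d+\tau+1)$, each comparable up to constants depending only on $d,\tau$ to $l_{a,\tau}(\nu_d)$; for the upper bound this directly produces the factor $1+\nu_{2d}\nu_d^{-2}$. The lower bound is the delicate part: bounding $w(s)\le 1$ reduces \eqref{eq:9} to $\int_0^a w(t)(1-2\nu_d\Lambda(t))\theta(\diff t)$, and the same substitution isolates the leading constant $\Gamma(p)(1-2p)=\Gamma(p)\tfrac{d-1-\tau}{d+\tau+1}$, which is the source of the factor $(d-1-\tau)$. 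The main obstacle is exactly this last step: one must show the subtracted double integral is controlled by a fixed fraction of the main term, compare the truncated (finite-$a$) incomplete Gamma integrals with their complete counterparts to extract a strictly positive constant $C_1'$, and keep track of the dependence on $\nu_d$ and $\nu_{2d}$. The remaining probabilistic ingredients are routine applications of the Mecke and Poincaré formulas.
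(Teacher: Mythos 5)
Your proofs of \eqref{eq:9} and \eqref{eq:vub} are correct and in substance coincide with the paper's. For \eqref{eq:9}, the paper organizes the same Mecke computation as $\Var(F)=\E F-I_0+I_1$ with $I_1\ge 0$ supported on incomparable pairs; that is exactly your observation that the bracket $\ind\{\by\notin L_\bx,\,\bx\notin L_\by\}e^{\mu(L_\bx\cap L_\by)}-1$ is nonnegative off the shading events and equals $-1$ on them (which are indeed disjoint up to a $\mu^2$-null set). For \eqref{eq:vub}, the paper uses precisely your decomposition of the add-one cost, drops the same cross term, and carries out the spatial integration via the identity $\int_{\R^d}\lambda\big(B_{r_1}(0)\cap B_{r_2}(x)\big)\,\diff x=\bvol^2r_1^dr_2^d$ rather than your sequential order ($y_2$, then $x$, then $y_1$); both orders yield the identical bound, so these differences are cosmetic.

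The genuine gap is in the lower bound of \eqref{eq:vgam}, at exactly the step you flag as ``the main obstacle'' and leave open. The constant $\Gamma(p)(1-2p)$ with $p=(\tau+1)/(d+\tau+1)$, extracted from the \emph{complete} Gamma integrals, proves nothing by itself: your reduction of \eqref{eq:9} produces the \emph{truncated} quantity $\gamma(p,u_a)-2\gamma(p+1,u_a)$ with $u_a=B\bvol\nu_d a^{d+\tau+1}$, $B=B(d+1,\tau+1)$, and your proposed remedy of ``comparing truncated integrals with their complete counterparts'' cannot work at the endpoint $\tau=d-1$, where $1-2p=0$ and the complete-integral constant vanishes while positivity is still claimed. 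What closes the argument --- and is what the paper does --- is the pointwise inequality $\gamma(x+1,y)<x\,\gamma(x,y)$ for all $x,y>0$, an immediate consequence of integration by parts, $\gamma(x+1,y)=x\,\gamma(x,y)-y^xe^{-y}$. Applied with $x=p$, $y=u_a$, it gives, uniformly in the truncation point,
\begin{equation*}
  \gamma(p,u_a)-2\gamma(p+1,u_a)\;>\;(1-2p)\,\gamma(p,u_a)
  \;=\;\frac{d-1-\tau}{d+\tau+1}\,\gamma(p,u_a)\;\ge\;0\,,
\end{equation*}
with strictness (coming from the boundary term $y^xe^{-y}$) accounting for the positive $C_1'$ at $\tau=d-1$. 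Then \eqref{eq:lasc} converts $\gamma(p,u_a)$ into a constant multiple (depending only on $d,\tau$) of $\gamma\big(p,\nu_d a^{d+\tau+1}\big)=\nu_d^{\,p}\,l_{a,\tau}(\nu_d)$, yielding the stated bound $C_1(d-1-\tau)$. With this single inequality inserted where you identified the obstacle, your argument becomes a complete proof matching the paper's: your preliminary reduction (bounding $w(s)\le 1$ and substituting $u=\nu_d\Lambda(t)$) is equivalent to the paper's substitution $s=tu$ followed by rescaling, and your treatment of the upper bound in \eqref{eq:vgam} agrees with the paper's scaling computation.
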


We remark here that the lower bound in \eqref{eq:vgam} is useful only
when $\tau \le d-1$. We believe that a positive lower bound still
exists when $\tau>d-1$, even though our arguments in general do not
apply for such $\tau$.

In the case of a deterministic speed $v$, Proposition~\ref{prop:var}
provides an explicit condition on $\theta$ ensuring that the variance
scales like the volume of the observation window in the classical
Johnson--Mehl growth model.  The problem of finding such a condition,
explicitly formulated in \cite[page~754]{CQ01}, arose in \cite{CQ97},
where asymptotic normality for the number of exposed seeds in a
region, as the volume of the region approaches infinity, is obtained
under the assumption that the variance scales properly.  This was by
then only shown numerically for the case when $\theta$ is the Lebesgue
measure and $d=1,2,3,4$.  Subsequent papers \cite{PY02, SY08} derived
the variance scaling for $\theta$ being the Lebesgue measure and some
generalizations of it, but in a slightly different formulation of the
model, in which seeds that do not appear in the observation window are
automatically rejected and cannot influence the growth pattern in the
region $W$.

It should be noted that it might also be possible to use Theorem~1.2
in \cite{MR4019871} to obtain a quantitative CLT and variance
asymptotics for statistics of the exposed points in a domain $W$ which
is the union of unit cubes around a subset of points in
$\mathbb{Z}^d$. For this, one would need to check Assumption~1.1 from
the cited paper, which ensures non-degeneracy of the variance, and a
moment condition in the form of equation~(1.10) therein. It seems to
us that checking Assumption~1.1 can be a challenging task and would
involve further assumptions on the model, such as the one we also need
in our Proposition~\ref{prop:var}. Controls on the long-range
interactions would also be necessary to check (1.10). Thus, while this
might indeed yield results similar to us, the goal of the present work
is to highlight the application of region-stabilization in this
context, which in general is of a different nature from the methods in
\cite{MR4019871}. For example, the approach in \cite{MR4019871} does
not apply for Pareto minimal points in a hypercube considered in
\cite{bha:mol2021}, since there is no polynomial decay in long-range
interactions, while region-stabilization yields optimal rates for the
Gaussian convergence.

The bounds in Theorem~\ref{thm:birth} can be specified under two
different scenarios. When considering a sequence of weight functions,
under suitable conditions Theorem~\ref{thm:birth} provides a
quantitative CLT for the corresponding functionals $(F_n)_{n \in
  \N}$. Keeping all other quantities fixed with respect to $n$,
consider the sequence of non-negative location-weight functions on
$\R^d$ given by $h_{1,n} = \ind_{n^{1/d} W}$ for a fixed convex body
$W \subset \R^d$ satisfying (A).
In view of Proposition~\ref{prop:var}, this provides
the following quantitative CLT.

\begin{theorem}\label{thm:scale}
  Let the assumptions (A)--(C) be in force.  For $n \in \N$ and $\eta$
  as in Theorem~\ref{thm:birth}, let $F_n:=F_n(\eta)$, where $F_n$ is
  defined as in \eqref{eq:F} with $a$ independent of $n$ and
  $h_1=h_{1,n} = \ind_{n^{1/d}W}$.  Assume that $\theta$ and $\nu$
  satisfy
  \begin{equation}\label{eq:thcond}
    \int_{0}^a w(t) \theta(\diff t)
    - 2 \bvol \spm  \int_0^a \int_0^{t} (t-s)^d w(s)
    w(t)
    \theta(\diff s) \theta(\diff t)>0\,,
  \end{equation}
  where $w(t)$ is given at \eqref{eq:6}. Then there exists a constant
  $C \in (0,\infty)$, depending on $a$, $d$, the first $7d$ moments of
  $\nu$, $\theta$ and $W$, such that
  \begin{displaymath}
    \max\left\{d_{\mathrm{W}}\left(\frac{F_n - \E F_n}{\sqrt{\Var
            F_n}},N\right),d_{\mathrm{K}}\left(\frac{F_n - \E F_n}{\sqrt{\Var
            F_n}},N\right)\right\}
    \le C n^{-1/2} 
  \end{displaymath}
  for all $n\in\N$. In particular, \eqref{eq:thcond} is
  satisfied for $\theta$ given at \eqref{eq:10} with
  $\tau \in (-1,d-1]$. 
  
  Furthermore, the bound on the Kolmogorov distance is of optimal
  order, i.e., when \eqref{eq:thcond} holds, there exists a constant
  $0 < C' \le C$ depending only on $a$, $d$, the first $2d$ moments of
  $\nu$, $\theta$ and $W$, such that
  \begin{displaymath}
    d_{\mathrm{K}}\left(\frac{F_n - \E F_n}{\sqrt{\Var F_n}},N\right)
    \ge C' n^{-1/2}.
  \end{displaymath}
\end{theorem}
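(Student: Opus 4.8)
The plan is to derive Theorem~\ref{thm:scale} as a specialization of Theorem~\ref{thm:birth}, where the key point is to track how the two main quantities, $V(n^{1/d}W)$ and $\Var(F_n)$, scale in $n$. First I would compute the scaling of the intrinsic volumes under the dilation $W \mapsto n^{1/d}W$. Since the $j$-th intrinsic volume is homogeneous of degree $j$, we have $V_j(n^{1/d}W) = n^{j/d} V_j(W)$, so that $V(n^{1/d}W) = \max_{0 \le j \le d} n^{j/d} V_j(W)$, which for large $n$ is dominated by the top-dimensional term $V_d(n^{1/d}W) = n\,\lambda(W)$; hence $V(n^{1/d}W) = \Theta(n)$. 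Next, the variance: since $a$, $\theta$, and $\nu$ are held fixed in $n$ and $\lambda(n^{1/d}W) = n\,\lambda(W)$, the lower bound of Proposition~\ref{prop:var} gives $\Var(F_n) \ge c\,n$ precisely when the bracketed expression in \eqref{eq:9}, which is exactly the quantity assumed positive in \eqref{eq:thcond}, is strictly positive; the upper bound \eqref{eq:vub} similarly yields $\Var(F_n) \le C\,n$, so $\Var(F_n) = \Theta(n)$ under the hypothesis \eqref{eq:thcond}.

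With these two scalings in hand, I would substitute $V(n^{1/d}W) = \Theta(n)$ and $\Var(F_n) = \Theta(n)$ directly into the bounds of Theorem~\ref{thm:birth}. Each summand in the Wasserstein bound becomes $\sqrt{n}/n = n^{-1/2}$ and $n/n^{3/2} = n^{-1/2}$, respectively, so the Wasserstein bound is $O(n^{-1/2})$. For the Kolmogorov bound, the additional term contributes $(n^{5/4} + n^{3/2})/n^2 = n^{-3/4} + n^{-1/2}$, whose dominant order is again $n^{-1/2}$. Collecting the worst-case exponent across all terms gives the uniform bound $C\,n^{-1/2}$ for all $n \in \N$, which is the asserted upper bound. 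I would then verify the final sentence of the upper-bound part by invoking the last assertion of Proposition~\ref{prop:var}: for $\theta$ as in \eqref{eq:10} with $\tau \in (-1, d-1]$, the lower bound in \eqref{eq:vgam} is strictly positive (the factor $C_1(d-1-\tau)$ being nonnegative and the constant $C_1'$ positive), so \eqref{eq:thcond} holds automatically in this parameter range.

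For the matching lower bound on the Kolmogorov distance, the natural approach is a standard Berry--Esseen-type argument showing that a genuinely non-degenerate sum cannot approach Gaussianity faster than $n^{-1/2}$. The cleanest route is to use a general lower bound of the form $d_{\mathrm K}((F_n - \E F_n)/\sqrt{\Var F_n}, N) \ge c\,\E|F_n - \E F_n|^3 / (\Var F_n)^{3/2}$, valid for sums of the type arising here, so that the problem reduces to bounding the third central moment from below by $c'\,n$. Since $F_n$ counts exposed points and the contributions from well-separated regions of the window are asymptotically independent, one expects $\E|F_n - \E F_n|^3 = \Theta(n)$ by the same volume-scaling heuristic that governs the variance; combined with $\Var(F_n) = \Theta(n)$ this yields the ratio $\Theta(n)/\Theta(n^{3/2}) = \Theta(n^{-1/2})$, giving the desired lower bound with a constant $C'$ depending only on $a$, $d$, the first $2d$ moments of $\nu$, $\theta$, and $W$ (the $2d$ moments suffice because only second- and third-order quantities enter, matching the moment dependence already seen in the variance upper bound \eqref{eq:vub}).

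The main obstacle is the lower bound on the Kolmogorov distance, specifically establishing $\E|F_n - \E F_n|^3 \ge c'\,n$. Unlike the variance, which Proposition~\ref{prop:var} already controls from below, the third absolute central moment is not directly supplied and requires a separate argument; the difficulty is compounded by the long-range dependence created by the unbounded random speeds, which obstructs a naive decomposition of $W$ into independent blocks. I anticipate that the resolution proceeds by a localization argument using the finite moment assumption (C) to control the probability of long-range interactions, restricting to a sub-window of a fixed fraction of $n^{1/d}W$ on which a lower bound on the variance of the localized statistic transfers to the third moment via the equivalence of moments for sums with a nontrivial independent component. Getting the moment-dependence bookkeeping to come out to exactly the first $2d$ moments, rather than the full $7d$ required by the upper-bound half, is the delicate point that distinguishes the constant $C'$ from $C$.
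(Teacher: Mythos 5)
Your upper-bound argument is correct and essentially coincides with the paper's: intrinsic volumes are homogeneous, so $V(n^{1/d}W)\le n\,V(W)$; the hypothesis \eqref{eq:thcond} together with \eqref{eq:9} gives $\Var F_n\ge c\,n$; substituting these into Theorem~\ref{thm:birth} yields the rate $n^{-1/2}$ in both metrics; and the ``in particular'' claim for $\theta$ as in \eqref{eq:10} with $\tau\in(-1,d-1]$ is exactly what the proof of the lower bound in \eqref{eq:vgam} establishes, namely that the bracket in \eqref{eq:9} is strictly positive in that range.

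The genuine gap is in the optimality part. The inequality you take as a starting point,
$d_{\mathrm{K}}\bigl((F_n-\E F_n)/\sqrt{\Var F_n},\,N\bigr)\ge c\,\E|F_n-\E F_n|^3/(\Var F_n)^{3/2}$,
is not a valid general principle: Berry--Esseen-type results give \emph{upper} bounds of this form, never lower bounds, and the proposed inequality fails outright for any random variable that is exactly Gaussian (left-hand side $0$, right-hand side positive). Genuine $n^{-1/2}$ lower bounds via third-order quantities require a non-vanishing third \emph{cumulant} and an Edgeworth-type expansion, which is nowhere near available for this dependent functional; on top of that, your reduction to $\E|F_n-\E F_n|^3\ge c'n$ is itself left as an unresolved localization step. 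The paper's argument is entirely different and much softer: $F_n$ is integer-valued (it counts exposed points), and by Englund \cite[Theorem~1.1, Eq.~(1.6)]{En81} the Kolmogorov distance between any normalized integer-valued random variable and the standard normal is bounded below by a universal constant times $1/\sqrt{\Var F_n}$ --- the normalized variable sits on a lattice of span $1/\sqrt{\Var F_n}$ while the normal distribution function is continuous. What one then needs is an \emph{upper} bound on the variance, $\Var F_n\le Cn$, which is supplied by \eqref{eq:vub} (or \eqref{eq:vgam} in the power-law case); this is also precisely where the dependence of $C'$ on only the first $2d$ moments of $\nu$ comes from, since \eqref{eq:vub} involves only $\nu_d$ and $\nu_{2d}$. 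So the key missing idea is the discreteness of $F_n$ combined with a variance upper bound, not any third-moment estimate.
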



When \eqref{eq:thcond} is satisfied, Theorem~\ref{thm:scale} yields a
CLT for the number of exposed seeds born before time
$a\in (0,\infty)$, with rate of convergence of order $n^{-1/2}$.  This
extends the CLT for the number of exposed seeds from \cite{CQ97} in
several directions: the model is generalized to random growth speeds,
there is no constraint of any kind on the shape of the window
$W$ except convexity, and a logarithmic factor is removed
from the rate of convergence.

In a different scenario, if $\theta$ has a power-law density
\eqref{eq:10} with $\tau \in (-1,d-1]$, it is possible to explicitly
specify the dependence of the bound in Theorem~\ref{thm:birth} on the
moments of $\nu$, as stated in the following result. Note that for the
above choice of $\theta$, the assumption (B) is trivially
satisfied. Denote
$$
V_\nu(W):=\sum_{i=0}^d V_{d-i}(W) \spm[d+i],
$$
which is the sum of the intrinsic volumes of $W$ weighted by the
moments of the speed.

\begin{theorem}
  \label{thm:speed}
  Let the assumptions (A) and (C) be in force.  For $\theta$ given at
  \eqref{eq:10} with $\tau \in (-1,d-1]$, consider $F=F(\eta)$, where
  $\eta$ is as in Theorem~\ref{thm:birth} and $F$ is defined as in
  \eqref{eq:F} with $a \in (0,\infty)$.  Then, there exists a constant
  $C \in (0,\infty)$, depending only on $d$ and $\tau$, such that
  \begin{multline*}
    d_{\mathrm{W}}\left(\frac{F - \E F}{\sqrt{\Var F}},N\right)\\
    \leq
    C (1+a^d) \left(1+\spm[7d]\spm^{-7}\right)^{2}
    \Bigg[
    \frac
    {\spm^{-\frac{1}{2}\left(\frac{\tau+1}{d+\tau+1}+1\right)}
      \sqrt{V_\nu(W)}}{l_{a,\tau}(\nu_d) \lambda(W)}
    + \frac{\spm^{-\frac{\tau+1}{d+\tau+1}-1} V_\nu(W)}
    {l_{a,\tau}(\nu_d)^{3/2}\lambda(W)^{3/2}}
    \Bigg]\,,
  \end{multline*}
  and
  \begin{multline*}
    d_{\mathrm{K}}\left(\frac{F - \E F}{\sqrt{\Var F}},N\right)
    \leq  C (1+a^d)^{3/2} \left(1+\spm[7d]\spm^{-7}\right)^{2}
    \Bigg[
    \frac
    {\spm^{-\frac{1}{2}\left(\frac{\tau+1}{d+\tau+1}+1\right)}
      \sqrt{V_\nu(W)}}{l_{a,\tau}(\nu_d) \lambda(W)}
    \\
    + \frac{\spm^{-\frac{\tau+1}{d+\tau+1}-1} V_\nu(W)}
    {l_{a,\tau}(\nu_d)^{3/2}\lambda(W)^{3/2}}  + \frac
    {\spm^{-\frac{5}{4}\left(\frac{\tau+1}{d+\tau+1}+1\right)}V_\nu(W)^{5/4} + \spm^{-\frac{3}{2}\left(\frac{\tau+1}{d+\tau+1}+1\right)} V_\nu(W)^{3/2}}
    {l_{a,\tau}(\nu_d)^2 \lambda(W)^{2}}
    \Bigg],
  \end{multline*}
  where $l_{a,\tau}$ is defined at \eqref{eq:la}.
\end{theorem}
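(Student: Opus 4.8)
The plan is to re-run the proof of Theorem~\ref{thm:birth}, which applies the region-stabilization bound \cite[Theorem~2.1]{bha:mol2021} to the score $\xi$ of \eqref{eq:4}, but to retain the dependence on $\nu$ that Theorem~\ref{thm:birth} absorbs into its constant, and then to insert the variance lower bound of Proposition~\ref{prop:var}. Comparing the three bracketed terms of Theorem~\ref{thm:speed} with those of Theorem~\ref{thm:birth}, it suffices to establish, for $\theta$ as in \eqref{eq:10} with $\tau\in(-1,d-1]$, the two replacements
$$
\Var F \;\asymp\; l_{a,\tau}(\spm)\,\lambda(W),
\qquad
V(W) \;\longleftrightarrow\; \spm^{-\left(\frac{\tau+1}{d+\tau+1}+1\right)} V_\nu(W),
$$
together with the overall moment prefactor $(1+\spm[7d]\spm^{-7})^2$ and the powers $(1+a^d)$, $(1+a^d)^{3/2}$. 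One checks directly that raising these replacements to the powers $1/2,\,1,\,5/4,\,3/2$ occurring in the numerators, and to $1,\,3/2,\,2$ in the denominators, reproduces exactly the exponents in the stated Wasserstein and Kolmogorov bounds.

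The first replacement is immediate from the lower bound in \eqref{eq:vgam}, which for power-law $\theta$ reads $\Var F\ge C_1'\,l_{a,\tau}(\spm)\,\lambda(W)$; this is where $\tau\le d-1$ is used. For the numerator I would open up the geometric integrals feeding conditions (A1), (A2) and equation~(2.1) of \cite{bha:mol2021}. Since $\xi\le 1$, all score moments are bounded by $1$, so the only genuine input is the $\mu$-size and overlap of the influence sets, and for $\theta$ as in \eqref{eq:10} one has the explicit value $\mu(L_{x,t_x})=\spm\,\bvol\,B(\tau+1,d+1)\,t_x^{d+\tau+1}$. The moment-weighted factor $V_\nu(W)=\sum_{i=0}^d V_{d-i}(W)\,\spm[d+i]$ then arises from a Steiner-type boundary expansion: a seed of speed $v$ born before time $a$ can reach a distance at most $va$, so the relevant locations form the $(va)$-tube of $W$, and in the $i$-th Steiner term the $v^d$ volume factor carried by each influence ball multiplies the $(va)^i$ reach factor weighting $V_{d-i}(W)$, producing after integration against $\nu$ the moment $\spm[d+i]$, the residual powers of $a$ being absorbed into the $(1+a^d)$ factors. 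The extra normalizing power $\spm^{-(\frac{\tau+1}{d+\tau+1}+1)}$ comes from the time integral of $w(t)=e^{-\spm\Lambda(t)}$ against $\theta$, most transparently handled by the rescaling $\tilde t=\spm^{1/(d+\tau+1)}t$, which normalizes $\spm$ out of $\mu(L_{x,t_x})$ while turning $\theta$ and $a$ into $\spm^{-(\tau+1)/(d+\tau+1)}\theta$ and $\spm^{1/(d+\tau+1)}a$, thereby generating the functions $l_{a,\tau}(\spm)$ of \eqref{eq:la}.

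Finally, the prefactor $(1+\spm[7d]\spm^{-7})^2$ records the high-order moment and non-stabilization-tail inputs demanded by the general theorem: controlling the $(4+p)$-th moment and tail terms forces the speed to enter up to order $7d$, and normalizing against $\spm$ to match the variance scaling yields the quotient $\spm[7d]\spm^{-7}$, which enters squared through the product structure of the underlying cumulant bounds. I expect the main obstacle to be the geometric bookkeeping of the middle step: showing that the boundary contributions of the stabilization integrals aggregate precisely into $V_\nu(W)$ with the correct power of $\spm$, uniformly over admissible windows $W$, rather than into a cruder bound that would only recover the $V(W)$-form of Theorem~\ref{thm:birth}. The remaining work, substituting \eqref{eq:vgam} into the refined bound and collecting the powers of $\spm$, $l_{a,\tau}(\spm)$ and $\lambda(W)$, is then routine.
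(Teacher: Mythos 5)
Your proposal is correct and follows essentially the same route as the paper's proof: the paper likewise re-runs the lemmas behind Theorem~\ref{thm:birth}, but with the refined Steiner bounds \eqref{eq:Stimp} and \eqref{eq:Stimp'} that keep the sums $\sum_{i} V_{d-i}(W) M_{d+i}$ (which, after the moment estimate $M_u \le C \spm[u](1+\spm[7d]\spm^{-7})$, aggregate into $V_\nu(W)$ times the stated prefactor), computes the $Q$-integrals explicitly for the power-law $\theta$ — equivalent to your time rescaling producing $l_{a,\tau}(\spm)$ — and then inserts the variance lower bound of Proposition~\ref{prop:var}. All the key mechanisms you identify (the tube expansion yielding $\spm[d+i]$, the time integrals yielding the powers of $\spm^{-(\tau+1)/(d+\tau+1)}$, and the $v^{5d}$ terms hidden in $G$ yielding the $(1+\spm[7d]\spm^{-7})^2$ factor) are exactly those carried out in the paper.
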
 

Note that our results for the number of exposed points can also be interpreted as quantitative central limit theorems for the number of local minima of the growth frontier, which is of independent interest. As an application of Theorem~\ref{thm:speed}, we consider the case
when the intensity of the underlying point process grows to
infinity. The quantitative central limit theorem for this case is
contained in the following result.

%

\begin{corollary}
  \label{cor:scale2}
  Let the assumptions (A) and (C) be in force. Consider $F(\eta_s)$
  defined at \eqref{eq:F} with $a \in (0,\infty)$, evaluated at the
  Poisson process $\eta_s$ with intensity
  $s\lambda\otimes\theta\otimes\nu$ for $s \ge 1$ and $\theta$ given
  at \eqref{eq:10} with $\tau \in (-1,d-1]$.  Then, there exists a
  finite constant $C \in (0,\infty)$ depending only on $W$, $d$, $a$,
  $\tau$, $\spm$, and $\spm[7d]$, such that, for all $s\ge 1$,
  \begin{align*}
    \max\left\{
    d_{\mathrm{W}}\left(\frac{F(\eta_s) - \E F(\eta_s)}
    {\sqrt{\Var F(\eta_s)}},N\right),
    d_{\mathrm{K}}\left(\frac{F(\eta_s) - \E F(\eta_s)}
    {\sqrt{\Var F(\eta_s)}},N\right)
    \right\}\leq
    C s^{-\frac{d}{2(d+\tau+1)}}\,.
  \end{align*}
  Furthermore, the bound on the Kolmogorov distance is of optimal
  order.
\end{corollary}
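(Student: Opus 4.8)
The plan is to reduce the intensity scaling to the fixed-intensity setting of Theorem~\ref{thm:speed} by an exact change of variables on $\XX$, and then to exploit the scale-invariance of the moment-dependent prefactor appearing there. First I would introduce the bijection $\Phi\colon\XX\to\XX$, $\Phi(x,t,v):=(s^{1/d}x,\,t,\,s^{1/d}v)$, which scales the space and speed coordinates by the same factor while leaving time untouched. Inspecting the defining inequality $\|x-y\|\le v_y(t_x-t_y)$ in \eqref{eq:3}, dividing by $s^{1/d}$ shows that $\by\preceq\bx$ if and only if $\Phi(\by)\preceq\Phi(\bx)$, so the exposure indicator is preserved: $H_{\bx}(\sM)=H_{\Phi(\bx)}(\Phi_*\sM)$. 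Computing the pushforward of $s\mu$, the factor $s$ on the Lebesgue part is exactly cancelled by the Jacobian $s^{-1}$ of $x\mapsto s^{1/d}x$, the measure $\theta$ from \eqref{eq:10} is unaffected because time is fixed, and $\nu$ is transported to the law $\tilde\nu$ of $s^{1/d}V$, $V\sim\nu$. Hence $\Phi_*\eta_s$ is a Poisson process with the standard intensity $\lambda\otimes\theta\otimes\tilde\nu$ and, because $\Phi$ fixes the time coordinate, the cutoff stays equal to $a$, giving
\[
F(\eta_s)=\sum_{\bx'\in\Phi_*\eta_s}\ind_{x'\in s^{1/d}W}\,\ind_{t_{x'}\le a}\,H_{\bx'}(\Phi_*\eta_s),
\]
i.e.\ $F(\eta_s)$ has the law of the functional \eqref{eq:F} on the window $s^{1/d}W$ with cutoff $a$ and speed law $\tilde\nu$.

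Next I would apply Theorem~\ref{thm:speed} to this rescaled functional, which is legitimate since (A) holds for $s^{1/d}W$ and (C) holds for $\tilde\nu$ (its $7d$-th moment is $s^{7}\spm[7d]<\infty$). The $u$-th moment of $\tilde\nu$ equals $s^{u/d}\spm[u]$, so the crucial prefactor $\tilde\nu_{7d}(\tilde\nu_d)^{-7}=s^{7}\spm[7d](s\spm)^{-7}=\spm[7d]\spm^{-7}$ is \emph{independent of $s$}; likewise $V_{\tilde\nu}(s^{1/d}W)=s^{2}V_\nu(W)$, $\lambda(s^{1/d}W)=s\,\lambda(W)$, while $l_{a,\tau}(\tilde\nu_d)=l_{a,\tau}(s\spm)\asymp s^{-(\tau+1)/(d+\tau+1)}$ for $s\ge1$, the implied constants depending only on $a,d,\tau,\spm$ because $\gamma(p,a^{d+\tau+1}s\spm)$ stays between $\gamma(p,a^{d+\tau+1}\spm)$ and $\Gamma(p)$, with $p:=\tfrac{\tau+1}{d+\tau+1}$. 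Since the constant in Theorem~\ref{thm:speed} depends only on $d,\tau$ and both prefactors $(1+a^{d})$ and $(1+\tilde\nu_{7d}(\tilde\nu_d)^{-7})^{2}$ are now $s$-independent, substituting these scalings into each bracketed term and collecting the powers of $s$ makes every Wasserstein term collapse to $s^{(p-1)/2}=s^{-d/(2(d+\tau+1))}$; the two extra Kolmogorov terms carry exponents $-\tfrac{3d}{4(d+\tau+1)}$ and $-\tfrac{d}{2(d+\tau+1)}$, hence decay no slower. This yields the claimed upper bounds, with the intermediate moments $\spm[d+i]$ entering $V_\nu(W)$ controlled by $\spm,\spm[7d]$ via Lyapunov interpolation.

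For the optimality of the Kolmogorov bound I would first record, via Proposition~\ref{prop:var} applied to the rescaled parameters, that $\Var F(\eta_s)\asymp\lambda(s^{1/d}W)\,l_{a,\tau}(s\spm)\asymp s^{\,d/(d+\tau+1)}$, the constants being $s$-uniform since the ratio $\tilde\nu_{2d}(\tilde\nu_d)^{-2}=\spm[2d]\spm^{-2}$ entering the upper bound is again scale-invariant. I would then run exactly the lower-bound argument used for the optimality assertion of Theorem~\ref{thm:scale} on the process $\Phi_*\eta_s$: it produces a bound of the form $d_{\mathrm{K}}\gtrsim(\Var F)^{-1/2}$ with a constant uniform in $s$ (the relevant normalized third-order quantity scales like the variance, and the moment ratios entering the constant are scale-invariant, as above), whence $d_{\mathrm{K}}\gtrsim s^{-d/(2(d+\tau+1))}$.

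The main obstacle is the design of the change of variables. One cannot simultaneously keep $\nu$ fixed, keep the cutoff $a$ fixed, and normalize the spatial intensity back to $\lambda$; the growth geometry forces the spatial and speed scalings to agree (so that $\preceq$ is preserved), and the remaining freedom must be spent absorbing the intensity factor $s$ into the speed law, turning it into $\tilde\nu$. Everything then hinges on the observation that the moment-ratio prefactor of Theorem~\ref{thm:speed} is invariant under $\nu\mapsto\tilde\nu$, which is precisely what prevents the spurious factor $s^{7}$ (or, had one instead let the cutoff grow under the alternative scaling $c_1=c_2=s^{1/(d+\tau+1)}$, a factor $(s^{1/(d+\tau+1)}a)^{d}$) from contaminating the rate. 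Verifying this invariance together with the matching variance growth is the crux of the argument.
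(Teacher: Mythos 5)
Your proposal is correct and follows essentially the same route as the paper: the paper likewise absorbs the intensity factor $s$ by rescaling space and speed by $s^{1/d}$ (its process $\eta^{(s)}$ with speed law $\nu^{(s)}(A)=\nu(s^{-1/d}A)$ is exactly your $\Phi_*\eta_s$ with $\tilde\nu$), applies Theorem~\ref{thm:speed} to the window $s^{1/d}W$ using $\nu^{(s)}_k=s^{k/d}\nu_k$, $\lambda(s^{1/d}W)=s\lambda(W)$, $V_{\nu^{(s)}}(s^{1/d}W)=s^2V_\nu(W)$ and the lower bound on $l_{a,\tau}(s\nu_d)$, and obtains optimality of the Kolmogorov rate from the integer-valuedness argument of Theorem~\ref{thm:scale} combined with the variance upper bound of Proposition~\ref{prop:var}. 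Your explicit bijection $\Phi$ and the verification that it preserves the relation $\preceq$ simply make precise the distributional identity the paper asserts, and your rate bookkeeping (all terms collapsing to $s^{-d/(2(d+\tau+1))}$ or faster) matches the paper's conclusion.
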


\section{Variance estimation}
\label{sec:variance-estimation}

In this section, we estimate the mean and variance of the statistic
$F$, thus providing a proof of Proposition~\ref{prop:var}. Recall the
weight function $h(\bx):=h_1(x) h_2(t_x)$, where
$h_1(x)=\ind\{x \in W\}$ and $h_2(t) = \ind\{t \le
a\}$. Notice that by the Mecke formula, 
the mean of $F$ is given by
\begin{align*}\label{eq:exp}
  \E F(\eta)&=\int_\XX h(\bx)\E H_\bx(\eta+\delta_\bx)\mu(\diff \bx) \nonumber\\
  &=\int_{\R^d}h_1(x) \diff x \int_0^\infty h_2(t) w(t) 
    \theta(\diff t)
    =\lambda(W)\int_0^a w(t) 
    \theta(\diff t),
\end{align*}
where $w(t)$ is defined at \eqref{eq:6}. In many instances,
we will use the simple inequality
\begin{equation}
  \label{eq:8}
  2ab\leq a^2 +b^2, \quad a,b \in \R_+.
\end{equation}
Also notice that for $x \in \R^d$,
\begin{equation}\label{eq:intball}
  \int_{\R^d} \lambda\big(B_{r_1}(0)\cap B_{r_2}(x)\big)\diff x
  =\int_{\R^d} \ind_{y\in B_{r_1}(0)} \int_{\R^d}
  \ind_{y\in B_{r_2}(x)}\diff x \diff y=\bvol^2r_1^dr_2^d\,.
\end{equation}
The multivariate Mecke formula (see, e.g., \cite[Th.~4.4]{last:pen})
yields that
\begin{multline*}
  \Var(F)=\int_\XX h(\bx)^2 \E H_{\bx}(\eta+\delta_\bx) \mu(\diff \bx)
  -\Big(\int_\XX h(\bx) \E H_{\bx}(\eta+\delta_\bx) \mu(\diff \bx)\Big)^2
  \\
  +\int_{D} h(\bx)h(\by)
  \E \big[H_{\bx}(\eta+\delta_{\by}+\delta_\bx)
  H_{\by}(\eta+\delta_{\bx}+\delta_\by)\big]
  \mu^2(\diff( \bx, \by)),
\end{multline*}
where the double integration is over the region $D\subset \XX$ where
the points $\bx$ and $\by$ are incomparable ($\bx \not \preceq \by$
and $\by \not \preceq \bx$), i.e.,
$$
D:=\big\{(\bx,\by) : \|x-y\| > \max\{v_x(t_y-t_x), v_y(t_x-t_y)\}\big\}.
$$
It is possible to get rid of one of the Dirac measures in the inner
integral, since on $D$ the points are incomparable. Thus, using the
translation invariance of $\E H_{\bx}(\eta)$, we have
\begin{equation}\label{eq:Var}
  \Var(F)= \lambda(W) \int_{0}^a w(t) 
  \theta(\diff t) - I_0+ I_1,
\end{equation}
where 
$$
I_0:=2 \int_{\XX^2} \ind_{\by \preceq \bx} h_1(x) h_1(y) h_2(t_x) h_2(t_y) w(t_x) w(t_y) 
\mu^2(\diff (\bx,\by)),
$$
and
$$
I_1:=\int_D h_1(x)h_1(y) h_2(t_x) h_2(t_y)\Big[
\E \big[H_{\bx}(\eta+\delta_\bx)H_{\by}(\eta+\delta_\by)\big] - w(t_x) w(t_y)\Big]
\mu^2(\diff (\bx, \by)).
$$
Finally, we will use the following simple inequality for the
incomplete gamma function
\begin{equation}\label{eq:lasc}
  \min\{1,b^x\} \gamma(x,y) \le \gamma(x,by) \le \max\{1,b^x\} \gamma(x,y),
\end{equation}
which holds for all $x \in \R_+$ and $b,y>0$.

\begin{proof}[Proof of Proposition~\ref{prop:var}]
  First, notice that the term $I_1$ in \eqref{eq:Var} is non-negative,
  since
  \begin{displaymath}
    \E [H_{\bx}(\eta)H_{\by}(\eta)]
    = e^{-\mu(L_{\bx} \cup L_{\by})}
    \ge e^{-\mu(L_{\bx})}e^{-\mu(L_{\by})}=w(t_x) w(t_y).
  \end{displaymath}
  Furthermore, \eqref{eq:8} yields that
  \begin{align*}
    I_0 \le \int_\XX & h_1(x)^2 h_2(t_x) w(t_x)
    \left[\int_{\XX} \ind_{\by \preceq \bx} h_2(t_y) w(t_y) \mu(\diff
      \by)\right]
    \mu(\diff \bx)\\
    & + \int_\XX h_1(y)^2 h_2(t_y) w(t_y) 
    \left[\int_{\XX} \ind_{\by \preceq \bx} h_2(t_x) w(t_x)
      \mu(\diff \bx)\right]\mu(\diff \by).
  \end{align*}
  Since $\by \preceq \bx$ is equivalent to $\|y-x\| \le v_y(t_x-t_y)$,
  the first summand on the right-hand side above can be simplified as
  \begin{align*}
    &\int_\XX h_1(x)^2 h_2(t_x) w(t_x) \left[\int_{\XX} \ind_{\by \preceq \bx} h_2(t_y) w(t_y) \mu(\diff \by)\right] \mu(\diff \bx) \\
	&=\int_{\R^d}\int_{0}^\infty h_1(x)^2 h_2(t_x) w(t_x)  \theta(\diff t_x) \diff x \int_0^\infty \int_{0}^{t_x} \bvol v_y^d (t_x-t_y)^d h_2(t_y) w (t_y) \theta(\diff t_y) \nu(\diff v_y)\\
	& =\bvol \spm  \lambda(W) \int_0^a \int_0^{t} (t-s)^dw(s) w(t) \theta(\diff s) \theta(\diff t).
  \end{align*}
  The second summand in the bound on $I_0$, upon interchanging
  integrals for the second step, turns into 
  \begin{align*}
    &\int_\XX h_1(y)^2 h_2(t_y) w(t_y) 
    \left[\int_{\XX} \ind_{\by \preceq \bx} h_2(t_x) w(t_x)
    \mu(\diff \bx)\right]\mu(\diff \by)\\
	&=\int_{\R^d}\int_{0}^\infty h_1(y)^2 h_2(t_y) w(t_y) \theta(\diff t_y) \diff y \int_0^\infty \int_{t_y}^{\infty} \bvol v_y^d (t_x-t_y)^d h_2(t_x) w (t_x) \theta(\diff t_x) \nu(\diff v_y)\\
	& =\bvol \spm   \lambda(W)  \int_0^a \int_0^{t} (t-s)^d  w(s) w(t) \theta(\diff s) \theta(\diff t).
  \end{align*}
  Combining, by \eqref{eq:Var} we obtain \eqref{eq:9}.
  
  To prove \eqref{eq:vub}, note that by the Poincar\'e inequality (see
  \cite[Sec.~18.3]{last:pen}),
  $$
  \Var(F) \le \int_\XX \E\big(F(\eta + \delta_{\bx})- F(\eta)\big)^2 \mu(\diff \bx).
  $$
  Observe that $\eta$ is simple and for $x\notin\eta$
  \begin{align*}
  F(\eta + \delta_\bx)- F(\eta)=h(\bx) H_\bx(\eta+\delta_\bx)
  - \sum_{\by \in \eta} h(\by) H_\by(\eta)\ind_{\by \succ \bx}\,. 
  \end{align*}
  The inequality
  $$
  -\sum_{\by \in \eta} h(\bx) h(\by) H_\bx(\eta+\delta_\bx) H_\by(\eta)\ind_{\by \succ \bx}\leq 0
  $$
  in the first step and the Mecke formula in the second step yield that
  \begin{align}\label{eq:po}
  &\int_\XX \E\big|F(\eta + \delta_{\bx})- F(\eta)\big|^2 \mu(\diff \bx)\nonumber\\
	&\leq
	\int_{\XX} \E\big[h(\bx)^2 H_\bx(\eta+\delta_\bx)\big]\mu(\diff\bx)
	+\int_{\XX} \E\Big[\sum_{\by,\bz \in \eta}
	\ind_{\by \succ \bx}\ind_{\bz \succ \bx}
	h(\by)h(\bz) H_\by(\eta)H_\bz(\eta)\Big]\mu(\diff\bx)\nonumber
	\\
	&=\int_\XX h(\bx)^2 w(t_x) \mu(\diff \bx) + \int_{\XX^2} \ind_{\by \succ \bx} h(\by)^2 w(t_y) \mu^2(\diff (\bx,\by))\nonumber \\
	&\qquad\qquad\qquad\qquad \qquad\qquad+ \int_\XX \int_{D_\bx} h(\by) h(\bz) e^{-\mu(L_{\by} \cup L_{\bz})} \mu^2(\diff (\by,\bz)) \mu(\diff \bx),
  \end{align}
  where 
  $$
  D_\bx:=\big\{(\by ,\bz) \in \XX^2 : \by \succ \bx, \bz \succ \bx, \by  \not \succ \bz , \bz  \not \succ \by\big\}.
  $$
  Using that $x e^{-x/2} \le 1$ for $x \in \R_+$, observe that
  \begin{align}
    \int_{\XX^2} \ind_{\by \succ \bx} h(\by)^2 w(t_y) \mu^2(\diff
    (\bx,\by))&= \int_\XX h(\by)^2 w(t_y) \mu(L_\by) \mu(\diff \by)
                \nonumber\\
    &\le
    \int_\XX h(\by)^2 w(t_y)^{1/2} \mu(\diff \by). \label{eq:po1}
  \end{align}
  Next, using that $\mu(L_{\by} \cup L_{\bz}) \ge (\mu(L_{\by}) +
  \mu(L_{\bz}))/2$ and that $D_\bx \subseteq \{\by,\bz \succ \bx\}$
  for the first inequality, and \eqref{eq:8} for the second one, we have
  \begin{align}\label{eq:vu}
  &\int_\XX \int_{D_\bx} h(\by) h(\bz) e^{-\mu(L_{\by} \cup L_{\bz})} \mu^2(\diff (\by,\bz)) \mu(\diff \bx) \nonumber\\
  &\le \int_\XX \int_{\XX^2} \ind_{\by,\bz \succ \bx} h(\by) h(\bz) w(t_y)^{1/2}w(t_z)^{1/2} \mu^2(\diff (\by,\bz)) \mu(\diff \bx)\nonumber\\
  &\le \int_{[0,a]^2}  w(t_y)^{1/2}w(t_z)^{1/2} \int_{\R^{2d}} h_1(z)^2 \left(\int_{\XX} \ind_{\bx \prec \by,\bz} \mu(\diff \bx)\right) \diff(y,z) \theta^2(\diff(t_y,t_z)).
  \end{align}
  By \eqref{eq:intball},
  \begin{align*}
  \int_{\R^d} \int_{\XX} \ind_{\bx \prec \by,\bz} \mu(\diff \bx) \diff y &=\int_{0}^{t_y \wedge t_z} \int_{0}^\infty \nu(\diff v_x) \theta(\diff t_x)  \int_{\R^d} \lambda\big(B_{v_x(t_y-t_x)}(y) \cap B_{v_x(t_z-t_x)}(z)\big) \diff y\\
  &=\bvol^2 \nu_{2d} \int_{0}^{t_y \wedge t_z}(t_y-t_x)^d (t_z-t_x)^d \theta(\diff t_x).
  \end{align*}
  Plugging in \eqref{eq:vu}, we obtain
  \begin{multline*}
  \int_\XX \int_{D_\bx} h(\by) h(\bz) e^{-\mu(L_{\by} \cup L_{\bz})} \mu^2(\diff (\by,\bz)) \mu(\diff \bx) \\
  \le  \bvol^2 \nu_{2d} \lambda(W) \int_{[0,a]^2} \int_{0}^{t_1 \wedge t_2}(t_1-s)^d (t_2-s)^d w(t_1)^{1/2}w(t_2)^{1/2}   \theta(\diff s) \theta^2(\diff(t_1,t_2)).
  \end{multline*}
  This in combination with \eqref{eq:po} and \eqref{eq:po1} proves \eqref{eq:vub}. 

  Now we move on to prove \eqref{eq:vgam}. We first confirm the lower bound. Fix $\tau \in (-1,d-1]$, as otherwise the bound is trivial, and $a \in (0,\infty)$. Then 
  $$
  \Lambda(t)=\bvol \int_0^t (t-s)^d s^\tau \diff s
  = \bvol t^{d+\tau+1} B(d+1,\tau+1)=B\, \bvol t^{d+\tau+1},
  $$
  where $B:=B(d+1,\tau+1)$ is a value of the Beta function.  Hence,
  $w(t)=\exp\{-B \,\bvol \spm t^{d+\tau + 1}\}$.  Plugging in, we obtain
  \begin{align}\label{eq:Varb}
    &\frac{\Var(F)}{\lambda(W)} \ge \int_{0}^a e^{-B \,\bvol \spm  t^{d+\tau + 1}} \theta(\diff t) - 2 \bvol \spm  \int_0^a \int_0^{t} (t-s)^d e^{-B \,\bvol \spm  (s^{d+\tau +1}+t^{d+\tau+1})} \theta(\diff s) \theta(\diff t)\nonumber\\
	&\quad\, =\left(\frac{1}{B\,\bvol \spm }\right)^{\frac{\tau+1}{d+\tau+1}}  \Bigg[\int_{0}^b  e^{- t^{d+\tau+1}}t^{\tau} \diff t 
	-\frac{2}{B} \int_0^b \int_0^{t} (t-s)^d e^{-(s^{d+\tau+1}+t^{d+\tau+1})} t^{\tau} s^{\tau} \diff s \diff t\Bigg],
  \end{align}
  where $b:=a(B \,\bvol \spm)^{1/(d+\tau+1)}$.
  Writing $s=tu$ for some $u \in [0,1]$, 
  \begin{align*}
    &\frac{2}{B} \int_0^b \int_0^{t} (t-s)^d e^{-(s^{d+\tau+1}+t^{d+\tau+1})} t^{\tau} s^{\tau}\diff s \diff t\\
    & \le \frac{2}{B} \int_0^b t^{d+2\tau+1} \int_0^{1} (1-u)^d u^{\tau} e^{-t^{d+\tau+1}(u^{d+\tau+1}+1)} \diff u \diff t < 2\int_0^b t^{d+2\tau+1} e^{-t^{d+\tau+1}} \diff t.
  \end{align*}
  By substituting $t^{d+\tau+1}=z$, it is easy to check that for any
  $\rho>-1$,
  $$
  \int_{0}^b e^{- t^{d+\tau+1}}t^\rho \diff t=\frac{1}{d+\tau+1} \gamma\left(\frac{\rho+1}{d+\tau+1},b^{d+\tau+1}\right),
  $$
  where $\gamma$ is the lower incomplete Gamma function.
  In particular, 
  using that $x\gamma(x,y)>\gamma(x+1,y)$ for $x,y>0$ we have
  \begin{align*}
  \int_{0}^b e^{- t^{d+\tau+1}}t^{d+2\tau+1} \diff t&=\frac{1}{d+\tau+1} \gamma\left(1+\frac{\tau+1}{d+\tau+1},b^{d+\tau+1}\right)\\
  &<\frac{\tau+1}{(d+\tau+1)^2} \gamma\left(\frac{\tau+1}{d+\tau+1},b^{d+\tau+1}\right).
  \end{align*}
  Thus, since $\tau\in (-1,d-1]$,
  \begin{align*}
  &\int_{0}^b e^{- t^{d+\tau+1}}t^\tau \diff t -\frac{2}{B} \int_0^b \int_0^{t} (t-s)^d e^{-(s^{d+\tau+1}+t^{d+\tau+1})} t^\tau s^\tau \diff s \diff t\\
  &>\gamma\left(\frac{\tau+1}{d+\tau+1},b^{d+\tau+1}\right) \frac{1}{d+\tau+1}\left[1- \frac{2(\tau+1)}{d+\tau+1} \right]\ge 0.
  \end{align*}
  By \eqref{eq:Varb} and \eqref{eq:lasc}, we obtain
the lower bound in \eqref{eq:vgam}.

For the upper bound in \eqref{eq:vgam}, for $\theta$ as in
\eqref{eq:10}, arguing as above we have
$$
\int_{0}^a w(t)^{1/2} \theta(\diff t)=\int_{0}^a e^{-B \,\bvol \spm  t^{d+\tau + 1}/2} \theta(\diff t)= \frac{\left(2/B\,\bvol \spm \right)^{\frac{\tau+1}{d+\tau+1}}}{d+\tau+1} \gamma\left(\frac{\tau+1}{d+\tau+1},b^{d+\tau+1}\right).
$$
Finally, substituting $s'=(B \,\bvol \spm)^{\frac{1}{d+\tau+1}} s$ and
similarly for $t_1$ and $t_2$, it is straightforward to see that
\begin{align*}
  \nu_{2d}  \int_{[0,a)^2} &\int_{0}^{t_1 \wedge t_1} (t_1-s)^d (t_2-s)^d w(t_1)^{1/2}w(t_2)^{1/2} \theta(\diff s) \theta^2(\diff(t_1,t_2)) \\
                           &\le C \nu_{2d} \nu_d^{-2} \nu_d^{-\frac{\tau+1}{d+\tau+1}} \left(\int_{\R_+} t^{d+\tau} e^{-t^{d+\tau+1}/4} \diff t\right)^2 \int_{0}^{b} s'^\tau e^{-s'^{d+\tau+1}/2} \diff s'\\
                           & \le C' \nu_{2d} \nu_d^{-2} \nu_d^{-\frac{\tau+1}{d+\tau+1}}   \gamma\left(\frac{\tau+1}{d+\tau+1},\frac{b^{d+\tau+1}}{2}\right)
\end{align*}
for some constants $C,C'$ depending only on $d$ and $\tau$. The upper
bound in \eqref{eq:vgam} now follows from \eqref{eq:vub} upon using
the above computation and \eqref{eq:lasc}.
\end{proof}

\section{Proofs of the Theorems}
\label{sec:proof-theor-refthm:b}

In this section, we derive our main results using Theorem~2.1 in
\cite{bha:mol2021}. While we do not restate this theorem here and
refer the reader to Section~2 in \cite{bha:mol2021}, it is important
to note that the Poisson process considered in
\cite[Theorem~2.1]{bha:mol2021} has the intensity measure $s\QQ$
obtained by scaling a fixed measure $\QQ$ on $\XX$ with
$s$. Nonetheless, the main result is non-asymptotic and, while in the
current paper, we consider a Poisson process with fixed intensity
measure $\mu$ (without a scaling parameter), we can still use
\cite[Theorem~2.1]{bha:mol2021} with $s=1$ and the measure $\QQ$
replaced by $\mu$. While still following the notation from
\cite{bha:mol2021}, we drop the subscript $s$ for ease of
notation.

Recall that for $\sM \in \Nb$, the score function $\xi(\bx,\sM)$ is
defined at \eqref{eq:4}.  It is straightforward to check that if
$\xi (\bx, \sM_1)=\xi (\bx, \sM_2)$ for some $\sM_1,\sM_2 \in \Nb$ with
$0\neq \sM_1 \leq \sM_2$ (meaning that $\sM_2-\sM_1$ is a nonnegative
measure) and $\bx \in \sM_1$, then $\xi (\bx, \sM_1)=\xi (\bx, \sM)$ for
all $\sM\in\Nb$ such that $\sM_1\leq \sM\leq \sM_2$, so that
equation (2.1) in \cite{bha:mol2021} holds.
Next we check assumptions
(A1) and (A2) in \cite{bha:mol2021}.


For $\sM \in \Nb$ and $x\in\sM$, define the stabilization region
\begin{displaymath}
  R(\bx,\sM):=
  \begin{cases}
    L_{x,t_x} & \text{if}\; \bx \;\text{is exposed in
      $\sM$},\\
    \emptyset & \text{otherwise}.
  \end{cases}
\end{displaymath}
Notice that
\begin{displaymath}
	\{\sM\in\Nb\colon \by\in R(\bx,\sM+\delta_\bx)\}
	\in\mathscr{N}\quad\text{for all}\ \bx,\by\in\XX,
\end{displaymath}  
and that
\begin{displaymath}
	\Prob{\by\in R(\bx,\eta + \delta_{\bx})}
	=\ind_{\by\preceq \bx} e^{-\mu(L_{x,t_x})}
	=\ind_{\by\preceq \bx} w(t_x),
\end{displaymath}
and
\begin{displaymath}
	\P\{\{\by,\bz\} \subseteq R(\bx, \eta +\delta_{\bx})\}
	=\ind_{\by\preceq \bx}\ind_{\bz\preceq\bx} e^{-\mu(L_{x,t_x})}
	=\ind_{\by\preceq \bx}\ind_{\bz\preceq\bx} w(t_x)
\end{displaymath} 
are measurable functions of $(\bx,\by)\in\XX^2$
and $(\bx,\by,\bz)\in\XX^3$ respectively,
with $w(t)$ defined at \eqref{eq:6}.
It is not hard to see that $R$ is monotonically decreasing in the
second argument, and that for all $\sM\in\Nb$ and $\bx\in\sM$,
$\sM(R(x,\sM))\geq 1$ implies that $\bx$ is exposed, hence
$(\sM+\delta_\by)(R(x,\sM+\delta_\by))\geq1$
for all $\by\not\in R(\bx,\sM)$.
Moreover, the function $R$ satisfies
\begin{displaymath}
  \xi\big(\bx,\sM\big)=\xi\Big(\bx,\sM_{R(\bx,\sM)}\Big),
  \quad \sM\in\Nb,\; \bx\in\sM\,,
\end{displaymath}
where $\sM_{R(\bx,\sM)}$ denotes the restriction of the measure $\sM$
to the region $R(\bx,\sM)$. It is important to note here that this
holds even when $\bx$ is not exposed in $\sM$, since in this case, the
left-hand side is $0$ where the right-hand side is $0$ by our
convention that $\xi(\bx,0)=0$.  Hence, assumptions (A1.1)--(A1.4) in
\cite{bha:mol2021} are satisfied.  Further, notice that for any
$p \in (0,1]$, for all $\sM\in\Nb$ with $\sM(\XX) \le 7$, we have
\begin{displaymath}
  \E\left[\xi(\bx,\eta+\delta_{\bx}+\sM)^{4+p}\right]
  \leq \ind_{x \in W}\ind_{t_x \le a} w(t_x),
\end{displaymath}
confirming condition (A2) in \cite{bha:mol2021} with
$M_p(\bx):=\ind\{x \in W, t_x \le a\}$. For definiteness, we take
$p=1$, and define
\begin{displaymath}
  \widetilde{M}(\bx):=\max\{M_1(\bx)^2,M_1(\bx)^4\}
  =\ind_{x \in W}\ind_{t_x\leq a}.
\end{displaymath}
Finally, define
\begin{equation*}
  r(\bx,\by):=
  \begin{cases}
    \spm \Lambda(t_x),\ &\text{if}\ \by\preceq \bx,\\
    \infty,\ &\text{if}\ \by\not\preceq \bx,
  \end{cases}
\end{equation*}
so that 
\begin{displaymath}
  \Prob{\by\in R(\bx,\eta + \delta_{\bx})}
  =\ind_{\by\preceq \bx} w(t_x)
  =e^{-r(\bx,\by)},\quad \bx,\by\in\XX,
\end{displaymath}
which corresponds to equation (2.4) in \cite{bha:mol2021}.  
Now that we have checked all the necessary conditions, we can invoke
Theorem~2.1 in \cite{bha:mol2021}. Let $\zeta:=\frac{p}{40+10p}=1/50$,
and define functions of $\by\in\XX$ by 
\begin{align}
  \label{eq:g}
  g(\by) :=
  & \int_{\XX} e^{-\zeta r(\bx, \by)} \,\mu(\diff \bx),\\
  \label{eq:g1}
  h(\by):=
  &\int_{\XX} \ind_{x \in W}\ind_{t_x\le a}
  e^{-\zeta r(\bx, \by)} \,\mu(\diff \bx),\\ 
  \label{eq:G}
  G(\by) :=
  & \ind_{y \in W}\ind_{t_x\leq a}
    + 
    \max\{h(\by)^{4/9}, h(\by)^{8/9}\} 
    \big(1+g(\by)^4\big).
\end{align}
For $\bx,\by \in \XX$, denote
\begin{equation}
  \label{eq:q}
  q(\bx,\by):=\int_\XX \P\Big\{\{\bx,\by\} 
  \subseteq R\big(\bz, \eta +\delta_{\bz}\big)\Big\} \,\mu(\diff \bz)
  =\int_{\bx\preceq \bz,\by\preceq \bz} w(t_z) \,\mu(\diff \bz).
\end{equation}
For $\alpha>0$, let
\begin{displaymath}
  f_\alpha(\by):=f_\alpha^{(1)}(\by)+f_\alpha^{(2)}(\by)+f_\alpha^{(3)}(\by),
\quad \by\in\XX,
\end{displaymath}
where for $\by \in \XX$,
\begin{align}
  \label{eq:fa1}
  f_\alpha^{(1)}(\by)&:=\int_\XX G(\bx) e^{- \alpha r(\bx,\by)}
                       \;\mu(\diff \bx)=\int_{\by\preceq\bx} G(\bx)
                       w(t_x)^\alpha \mu(\diff \bx), \nonumber\\
  f_\alpha^{(2)} (\by)&:=\int_\XX G(\bx) e^{- \alpha r(\by,\bx)} 
                        \;\mu(\diff \bx)=w(t_y)^\alpha
                        \int_{\bx\preceq\by}G(\bx)\mu(\diff \bx), \nonumber\\
  f_\alpha^{(3)}(\by)&:=\int_{\XX} G(\bx) q(\bx,\by)^\alpha \;\mu(\diff \bx).
\end{align}
Finally, let
\begin{displaymath}
  \kappa(\bx):= \Prob{\xi(\bx, \eta+\delta_{\bx}) \neq 0}
  = \ind_{x\in W}\ind_{t_x\le a}
  w(t_x),\quad
  \bx\in\XX. 
\end{displaymath}
For an integrable function $f : \XX \to \R$, denote
$\mu f:=\int_\XX f(\bx) \mu(\diff \bx)$. With
$\beta:=\frac{p}{32+4p}=1/36$, \cite[Theorem~2.1]{bha:mol2021} yields
that $F=F(\eta)$ as in \eqref{eq:F} satisfies
\begin{equation}\label{eq:Wass}
  d_{\mathrm{W}}\left(\frac{F-\E F}{\sqrt{\Var F}},  N\right) 
  \leq C \Bigg[\frac{\sqrt{ \mu f_\beta^2}}{\Var F}
  +\frac{ \mu ((\kappa+g)^{2\beta}G)}{(\Var F)^{3/2}}\Bigg],
\end{equation}
and
\begin{multline}\label{eq:Kol}
  d_{\mathrm{K}}\left(\frac{F-\E F}{\sqrt{\Var F}},
    N\right) 
  \leq C \Bigg[\frac{\sqrt{\mu f_\beta^2}
    + \sqrt{\mu f_{2\beta}}}{\Var F}
  +\frac{\sqrt{ \mu ((\kappa+g)^{2\beta}G)}}{\Var F}\\
  +\frac{ \mu ((\kappa+g)^{2\beta} G)}{(\Var F)^{3/2}}
  +\frac{( \mu ((\kappa+g)^{2\beta} G))^{5/4}
    + ( \mu ((\kappa+g)^{2\beta} G))^{3/2}}{(\Var F)^{2}}\Bigg],
\end{multline}
where $N$ is a standard normal random variable and
$C \in (0,\infty)$ is a constant.

In the rest of this section, we estimate the summands on the
right-hand side of the above two bounds to obtain our main
results. While the bounds above are admittedly quite difficult to
interpret, they essentially involve integrals of functions which are
products involving an exponential part and a polynomial part. Because
of a faster decay of the exponential part, the integrals grow at a
rate that is at most some small enough power of the variance of $F$,
and this yields the presumably optimal rates of convergence in
Theorem~\ref{thm:scale}.
We start with a simple lemma.


\begin{lemma}
  \label{lemma:psi}
  For all $x \in \R_+$ and $y>0$,
  \begin{equation}\label{eq:1a}
    Q(x,y):=\int_0^\infty t^x \, e^{-y\Lambda(t)}\, \theta(\diff t)
    = \int_0^\infty t^x \, w(t)^{y/\nu_d}\, \theta(\diff t) <\infty.
  \end{equation}
\end{lemma}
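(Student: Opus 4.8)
The claimed identity between the two integrals is immediate: by the definition of $w$ at \eqref{eq:6} one has $w(t)=e^{-\nu_d\Lambda(t)}$, so that $w(t)^{y/\nu_d}=e^{-y\Lambda(t)}$ and the two integrands coincide. Hence the only substantive claim is the finiteness of $Q(x,y)$. The difficulty is that assumption (B) controls $\int_0^\infty e^{-x\Lambda(t)}\,\theta(\diff t)$ only without a polynomial prefactor, whereas $Q(x,y)$ carries the extra factor $t^x$. The plan is to absorb this prefactor into half of the exponential decay and then invoke (B).

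Concretely, I would split the exponent as $e^{-y\Lambda(t)}=\big(t^x e^{-(y/2)\Lambda(t)}\big)\,t^{-x}e^{-(y/2)\Lambda(t)}$, which gives
\begin{displaymath}
  Q(x,y)\le \Big(\sup_{t\ge 0} t^x e^{-(y/2)\Lambda(t)}\Big)
  \int_0^\infty e^{-(y/2)\Lambda(t)}\,\theta(\diff t).
\end{displaymath}
The integral on the right is finite by assumption (B) applied with parameter $y/2>0$. It therefore remains to show that $t\mapsto t^x e^{-(y/2)\Lambda(t)}$ is bounded on $\R_+$.

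The key step is a polynomial lower bound on $\Lambda$. Since $\theta$ is non-null and locally finite, there is some $t_0>0$ with $\theta([0,t_0])>0$. For $t>t_0$, using $(t-s)^d\ge (t-t_0)^d$ for all $s\in[0,t_0]$ in \eqref{eq:7},
\begin{displaymath}
  \Lambda(t)\ge \bvol\int_0^{t_0}(t-s)^d\,\theta(\diff s)
  \ge \bvol\,\theta([0,t_0])\,(t-t_0)^d,
\end{displaymath}
so $\Lambda(t)\to\infty$ at least like a degree-$d$ polynomial. Because $d\ge 1$, the decay $e^{-(y\bvol\theta([0,t_0])/2)(t-t_0)^d}$ beats every power of $t$, whence $t^x e^{-(y/2)\Lambda(t)}\to 0$ as $t\to\infty$. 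On the compact interval $[0,t_0]$ the map is continuous (here I use that $\Lambda$ is continuous, which follows from local finiteness of $\theta$ and dominated convergence) and $e^{-(y/2)\Lambda(t)}\le 1$, so it is bounded there as well. Together these yield a finite supremum $C_{x,y}:=\sup_{t\ge0}t^x e^{-(y/2)\Lambda(t)}<\infty$, completing the argument.

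The only real obstacle is the polynomial factor $t^x$, which is not covered directly by (B); everything hinges on the observation that non-nullity of $\theta$ forces $\Lambda$ to grow at least polynomially of degree $d$, so that half of the exponential decay already dominates $t^x$. The continuity of $\Lambda$ near the origin, used to control the supremum on $[0,t_0]$, is routine and could equally be replaced by the crude bound $t^x\le \max\{1,t_0^x\}$ on that interval.
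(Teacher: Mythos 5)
Your proof is correct and follows essentially the same strategy as the paper's: split off half of the exponential decay, use the non-nullity of $\theta$ to get the polynomial lower bound $\Lambda(t)\gtrsim t^d$ for large $t$ (the paper derives $\Lambda(t)\ge 2^{-d}t^d\theta([0,c])$ for $t\ge 2c$, matching your $(t-t_0)^d$ bound), and invoke assumption (B) with the halved parameter for the remaining integral. The only cosmetic difference is that you extract a global supremum $\sup_{t\ge 0}t^x e^{-(y/2)\Lambda(t)}$, whereas the paper splits the integration domain at $2c$ and bounds $t^x/\Lambda(t)^{x/d}$ via the elementary inequality $w^{x/d}e^{-w/2}\le C$; both devices accomplish the same absorption of the factor $t^x$.
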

\begin{proof}
	Assume that $\theta([0,c])>0$ for some $c \in (0,\infty)$, since otherwise the
	result holds trivially. Notice that
	$$
	\int_0^{2c} t^x  \,e^{-y\Lambda(t)}\theta(\diff t) \le (2c)^x
	\int_0^{\infty} e^{-y\Lambda(t)}\theta(\diff t)<\infty
	$$
	by assumption (B). Hence, it suffices to show the finiteness of
	the integral over $[2c,\infty)$. The inequality
	$w^{x/d}e^{-w/2}\leq C$ for some finite constant $C>0$ yields that
	\begin{displaymath}
	\int_{2c}^\infty t^x \, e^{-y \Lambda(t)}\theta(\diff t)
	\leq \frac{C}{y^{x/d}}\int_{2c}^\infty \frac{t^x}{\Lambda(t)^{x/d}} \,
	e^{-y\Lambda(t)/2}\theta(\diff t).
	\end{displaymath}
	For $t \ge 2c$, 
	\begin{displaymath}
	\Lambda(t)=\int_0^t(t-s)^d\theta(\diff s)
	\geq \int_0^{t/2}(t-s)^d\theta(\diff s)
	\geq (t/2)^d\theta([0,t/2]) \ge 2^{-d}t^d \theta([0,c]).
	\end{displaymath}
	Thus,
	\begin{displaymath}
	\int_{2c}^\infty t^x \, e^{-y \Lambda(t)}\theta(\diff t)
	\leq \frac{C 2^{x}}{(y\theta([0,c]))^{x/d}} \int_{2c}^\infty
	e^{-y\Lambda(t)/2}\theta(\diff t)<\infty
	\end{displaymath}
	by assumption (B), yielding the result.
\end{proof}

To compute the bounds in \eqref{eq:Wass} and \eqref{eq:Kol}, we need
to bound $\mu f_{2\beta}$ and $\mu f_\beta^2$, with
$\beta=1/36$. Nonetheless, we provide bounds on $\mu f_\alpha$ and
$\mu f_\alpha^2$ for any $\alpha>0$.  By Jensen's inequality, it
suffices to bound $\mu f_{\alpha}^{(i)}$ and $\mu (f_\alpha^{(i)})^2$
for $i=1,2,3$. This is the objective of the following three lemmas.


For $g$ defined at \eqref{eq:g}
\begin{align*}
  g(\by)&=\int_\XX \ind_{\by\preceq \bx} w(t_x)^\zeta\mu(\diff \bx)
          =\int_{t_y}^\infty \int_{\R^d} \ind_{x \in B_{v_y(t_x-t_y)}(y)} w(t_x)^\zeta
          \;\diff x \theta(\diff t_x)\\
        &=\bvol v_y^d\int_{t_y}^\infty(t_x-t_y)^d
          w(t_x)^\zeta\, \theta(\diff t_x)\leq \bvol v_y^d\int_0^\infty t_x^d
          w(t_x)^\zeta \,\theta(\diff t_x)
          = \bvol  Q(d,\zeta\spm) \, v_y^d.
\end{align*}
where $Q$ is defined at \eqref{eq:1a}. Similarly, for $h$ as in \eqref{eq:g1} with $a \in (0,\infty)$, we have 
\begin{align*}
  h(\by)&=\int_\XX \ind_{\by\preceq \bx} w(t_x)^\zeta
          \ind_{x\in W}\ind_{t_x\le a} \mu(\diff \bx)\\
	&=\ind_{t_y \le a} \int_{t_y}^a \left(\int_{\R^d} \ind_{x \in
   B_{v_y(t_x-t_y)}(y)}
   \ind_{x\in W} \;\diff x \right) w(t_x)^\zeta
	\; \theta(\diff t_x)\\
	& \le \ind_{y \in W+B_{v_y(a-t_y)}(0)} \int_{t_y}^\infty
   \int_{\R^d}
   \ind_{x \in B_{v_y(t_x-t_y)}(y)} w(t_x)^\zeta
   \;\diff x \theta(\diff t_x) \\
  & \le \ind_{y \in W+B_{v_y(a-t_y)}(0)} \int_{t_y}^\infty
   \bvol v_y^d t_x^d w(t_x)^\zeta	\;\theta(\diff t_x) \\
	&\le  \ind_{y \in W+B_{v_y a}(0)}  \bvol  Q(d,\zeta\spm) \, v_y^d.
\end{align*}
Therefore, the function $G$
defined at \eqref{eq:G} for $a \in (0,\infty)$ is bounded by
\begin{align}\label{eq:Gbd}
  G(\by) &\le \ind_{y \in W} + \ind_{y \in W+B_{v_y a}(0)} (1+\bvol  Q(d,\zeta\spm) \, v_y^d)(1+\bvol^4  Q(d,\zeta\spm)^4 \, v_y^{4d})\nonumber\\
  & \le 6 \bvol^5  \ind_{y \in W+B_{v_y a}(0)}p(v_y)\,,
\end{align}
with 
$$
p(v_y):= 1+Q(d,\zeta\spm)^5 v_y^{5d}.
$$

Define
\begin{displaymath}
  M_u:=\int_0^\infty v^u p(v) \nu(\diff v),\quad u \in \R_+.
\end{displaymath}
In particular,
$$
M_0:=\int_0^\infty p(v) \nu(\diff v)=1+Q(d,\zeta\spm)^5 \spm[5d],
$$
and
$$
M:=M_0 + M_d = \int_0^\infty (1+v^d) p(v) \nu(\diff v) 
= 1+\spm + Q(d,\zeta\spm)^5 (\spm[5d]+\spm[6d]).
$$

Recall $V(W)$ defined at \eqref{eq:V} and denote
$\omega=\max_{0 \le j \le d} \omega_j$. The Steiner formula
(see \cite[Section~4.1]{schn2}) yields that
\begin{align}
  \int_{\R_+}\lambda(W+B_{v_xa}(0)) p(v_x) \nu(\diff v_x)
  &= \sum_{i=0}^d \int_{\R_+}\omega_i v_x^i a^i V_{d-i}(W) \,p(v_x)\nu(\diff v_x) \nonumber\\
  &\le \omega (1+a^d)  \sum_{i=0}^d V_{d-i}(W) M_i \label{eq:Stimp}\\
  & \le c_d (1+a^d) M \,  V(W),\label{eq:Steiner}
\end{align}
with $c_d=(d+1)\omega$, where in the final step we have used the
simple inequality $v_x^a \le 1+v_x^b$ for any $0\le a\le b<\infty$. We
will use this fact many times in the sequel without any
mentioning.

We will also often use the fact that for an increasing function $f$ of
the speed $v$, since $p$ is also increasing, by positive association,
we have
$$
\int_{\R_+} f(v) p(v) \nu(\diff v) \ge M_0 \int_{\R_+} f(v) \nu(\diff v).
$$

\begin{lemma}\label{lem:1}
  For $a \in (0,\infty)$, $\alpha>0$ and $f_\alpha^{(1)}$ defined at
  \eqref{eq:fa1},
  $$
  \int_{\XX} f_\alpha^{(1)}(\by)\mu(\diff \by) \le 
  C_1 \,V(W) \; \text{ and }\;
  \int_{\XX} f_\alpha^{(1)}(\by)^2\mu(\diff \by) \le 
  C_2\,V(W),
  $$
  where
  \begin{align*}
    C_1&:=C  (1+a^d) M \frac{Q(0,\alpha \spm/2)}{\alpha},\\
    C_2&:=C  (1+a^d) \, M_0 M \spm[2d] Q(d,\alpha\spm/2)^2 Q(0,\alpha\spm),
  \end{align*}
  for a constant $C \in (0,\infty)$ depending only on $d$.
  \end{lemma}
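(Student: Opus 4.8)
The plan is to treat the two bounds separately, in each case reducing everything to the one-dimensional integrals $Q(\cdot,\cdot)$ from Lemma~\ref{lemma:psi}, the moment factors $M,M_0,\spm[2d]$, and the Steiner-type estimate \eqref{eq:Steiner}.

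For the first bound I would start from the representation $f_\alpha^{(1)}(\by)=\int_{\by\preceq\bx}G(\bx)w(t_x)^\alpha\,\mu(\diff\bx)$ in \eqref{eq:fa1} and integrate over $\by$ first. By Fubini,
\[
\int_\XX f_\alpha^{(1)}(\by)\,\mu(\diff\by)
=\int_\XX G(\bx)\,w(t_x)^\alpha\Big(\int_\XX\ind_{\by\preceq\bx}\,\mu(\diff\by)\Big)\,\mu(\diff\bx),
\]
and the inner integral is exactly $\mu(L_{x,t_x})=\spm\Lambda(t_x)$, computed as in the $\mu$-content calculation following \eqref{eq:7}. Substituting the pointwise bound \eqref{eq:Gbd} for $G$, the integrand factorizes into a speed part and a time part: the $v_x$-integral $\int\lambda(W+B_{v_xa}(0))p(v_x)\,\nu(\diff v_x)$ is controlled by \eqref{eq:Steiner} and contributes $(1+a^d)M\,V(W)$, while the $t_x$-integral is $\int_0^\infty\Lambda(t_x)w(t_x)^\alpha\,\theta(\diff t_x)$. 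For the latter I would use the elementary inequality $u\,e^{-\alpha\spm u/2}\le C/(\alpha\spm)$ with $u=\Lambda(t_x)$ to get $\Lambda(t_x)w(t_x)^\alpha\le C(\alpha\spm)^{-1}w(t_x)^{\alpha/2}$, turning the time integral into $C(\alpha\spm)^{-1}Q(0,\alpha\spm/2)$ by \eqref{eq:1a}. Combining yields the claimed $C_1\,V(W)$.

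For the second bound I would expand the square and again integrate over the common dominating point last:
\[
\int_\XX f_\alpha^{(1)}(\by)^2\,\mu(\diff\by)
=\int_{\XX^2}G(\bx_1)G(\bx_2)w(t_{x_1})^\alpha w(t_{x_2})^\alpha
\Big(\int_\XX\ind_{\by\preceq\bx_1}\ind_{\by\preceq\bx_2}\,\mu(\diff\by)\Big)\,\mu^2(\diff(\bx_1,\bx_2)),
\]
where the inner $\by$-integral is an integral of the volume $\lambda\big(B_{v_y(t_{x_1}-t_y)}(x_1)\cap B_{v_y(t_{x_2}-t_y)}(x_2)\big)$. The crucial step is the location integration: substituting \eqref{eq:Gbd} and bounding the window indicator coming from $G(\bx_2)$ by $1$, I would integrate over $x_2$ via the identity \eqref{eq:intball} to obtain $\bvol^2(v_y(t_{x_1}-t_y))^d(v_y(t_{x_2}-t_y))^d$, and then integrate over $x_1$ against the single remaining indicator $\ind_{x_1\in W+B_{v_{x_1}a}(0)}$. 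Retaining only \emph{one} window localization is what keeps the final bound linear in $V(W)$ rather than quadratic, and this is the main obstacle to watch: a naive bound of the intersection volume by the smaller ball would force two Steiner factors and the wrong order in $V(W)$.

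After this step the speed integrals separate into $\spm[2d]$ (from $v_y^{2d}$), a factor $M_0$ from the $\bx_2$-speed, and $\int\lambda(W+B_{v_{x_1}a}(0))p(v_{x_1})\,\nu(\diff v_{x_1})\le C(1+a^d)M\,V(W)$ via \eqref{eq:Steiner}. It remains to bound the time integral
\[
T:=\int_{(\R_+)^2}\int_0^{t_{x_1}\wedge t_{x_2}}(t_{x_1}-t_y)^d(t_{x_2}-t_y)^d
w(t_{x_1})^\alpha w(t_{x_2})^\alpha\,\theta(\diff t_y)\,\theta^2(\diff(t_{x_1},t_{x_2}))
\]
by $Q(d,\alpha\spm/2)^2Q(0,\alpha\spm)$. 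The key trick is to manufacture decay in the innermost birth time $t_y$: using that the average exceeds the minimum, $\Lambda(t_{x_1})+\Lambda(t_{x_2})\ge\tfrac12\Lambda(t_{x_1})+\tfrac12\Lambda(t_{x_2})+\Lambda(t_{x_1}\wedge t_{x_2})$, together with the monotonicity of $\Lambda$ (so that $w(t_{x_1}\wedge t_{x_2})^\alpha\le w(t_y)^\alpha$ for $t_y\le t_{x_1}\wedge t_{x_2}$), I would bound $w(t_{x_1})^\alpha w(t_{x_2})^\alpha\le w(t_{x_1})^{\alpha/2}w(t_{x_2})^{\alpha/2}w(t_y)^\alpha$. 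Then $(t_{x_i}-t_y)^d\le t_{x_i}^d$ lets the three time integrals factor: the $t_y$-integral gives $Q(0,\alpha\spm)$ and each $t_{x_i}$-integral gives $Q(d,\alpha\spm/2)$ by \eqref{eq:1a}. Collecting all factors produces exactly $C_2\,V(W)$.
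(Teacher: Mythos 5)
Your proposal is correct and takes essentially the same route as the paper's proof: Fubini to reduce the inner $\by$-integral to $\mu(L_{x_1,t_{x_1}}\cap L_{x_2,t_{x_2}})$ (resp.\ $\spm\Lambda(t_x)$ for the first moment), keeping only one window indicator from \eqref{eq:Gbd}, the translation-invariance identity \eqref{eq:intball}, the Steiner bound \eqref{eq:Steiner}, and splitting $w^\alpha$ into $w^{\alpha/2}$ factors via monotonicity so the time integrals factor into $Q$'s. The only cosmetic difference is that the paper first rewrites the time integral as $\int_0^\infty\bigl(\int_s^\infty (t-s)^d w(t)^\alpha\,\theta(\diff t)\bigr)^2\theta(\diff s)$ before applying the same splitting inequality that you phrase through $\Lambda$ and the minimum.
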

\begin{proof}
  Using \eqref{eq:Gbd}, we can write
  \begin{multline*}
    \int_{\XX} f_\alpha^{(1)}(\by)\mu(\diff \by) = \int_{\XX}\int_{\by\preceq \bx} G(\bx)w(t_{x})^\alpha \mu(\diff \bx)\mu(\diff \by)\\
     \le 6 \bvol^5  \spm  \int_{\XX} \Lambda(t_x)  \ind_{x \in W+B_{v_xa}(0)}p(v_x) w(t_{x})^\alpha \mu(\diff \bx) =: 6 \bvol^5  \spm \, I_1,
  \end{multline*}
whence using \eqref{eq:Steiner} and that $xe^{-x/2} \le 1$ for $x \in \R_+$, we obtain
\begin{align*}
	I_1 :&=\int_{\R_+^2}\lambda(W+B_{v_xa}(0)) p(v_x) \Lambda(t_x)w(t_{x})^\alpha \theta(\diff t_x) \nu(\diff v_x)\\
	&\le c_d (1+a^d) M \,  V(W) \int_{\R_+}  \Lambda(t_x)w(t_{x})^\alpha \theta(\diff t_x) \\
	&\le c_d (1+a^d) M  \frac{Q(0,\alpha \spm/2)}{\alpha \spm}\,  V(W),
\end{align*}
proving the first assertion.

For the second assertion, first by \eqref{eq:intball}, for any $t_1,t_2 \in \R_+$ we have
\begin{align}\label{eq:muint}
  \int_{\R^d}&\mu(L_{0,t_1}\cap L_{x,t_2})\diff x
               =\int_0^{t_1\wedge t_2} \theta(\diff s)\int_0^\infty \nu(\diff v)
               \int_{\R^d} \lambda(B_{v(t_1-s)}(0)\cap B_{v(t_2-s)}(x))\diff x \nonumber\\
             &=\bvol^2 \int_0^\infty v^{2d}\nu(\diff v) \int_0^{t_1\wedge t_2}
               (t_1-s)^d(t_2-s)^d\theta(\diff s) \nonumber\\
             &=\bvol^2\spm[2d]  \int_0^{t_1\wedge t_2}
               (t_1-s)^d(t_2-s)^d\theta(\diff s)=: \ell(t_1,t_2)
\end{align}
which is symmetric in $t_1$ and $t_2$. Thus, changing the order of the
integrals in the second step and, using \eqref{eq:Gbd} for the final
step, we get
\begin{align}\label{eq:f1}
    \int_{\XX} &f_\alpha^{(1)}(\by)^2\mu(\diff \by)
                 =\int_{\XX}\int_{\by\preceq \bx_1}\int_{\by\preceq \bx_2} G(\bx_1)w(t_{x_1})^\alpha G(\bx_2)w(t_{x_2})^\alpha \mu(\diff \bx_1)\mu(\diff \bx_2)\mu(\diff \by)\nonumber\\
               &=\int_\XX\int_\XX\left(\int_{\by\preceq \bx_1,\by\preceq\bx_2}
                 \mu(\diff \by)\right)G(\bx_1)G(\bx_2)
                 \big(w(t_{x_1})w(t_{x_2}) \big)^\alpha
                 \mu(\diff \bx_1)\mu(\diff \bx_2)\nonumber\\
               &=\int_\XX\int_\XX
                 \mu(L_{x_1,t_{x_1}}\cap L_{x_2,t_{x_2}})
                 G(\bx_1)G(\bx_2)
                 \big (w(t_{x_1})w(t_{x_2}) \big)^\alpha
                 \mu(\diff \bx_1)\mu(\diff \bx_2)\nonumber\\
               &\leq 36 \bvol^{10} \, M_0 I_2,
\end{align}
where
\begin{multline*}
  I_2:=\int_{\R_+^3}\left(\int_{\R^d} \left(\int_{\R^d}
      \mu(L_{0,t_{x_1}}\cap L_{x_2-x_1,t_{x_2}}) \diff x_2 \right)
    \ind_{x_1 \in W + B_{v_{x_1}a}(0)}  \diff x_1 \right)\\
  \times p(v_{x_1}) \big (w(t_{x_1})w(t_{x_2}) \big)^\alpha
  \theta^2(\diff (t_{x_1}, t_{x_2}) \nu (\diff v_{x_1}).
\end{multline*}
By \eqref{eq:Steiner} and \eqref{eq:muint}, we have
\begin{align*}
  	I_2&=\int_{\R_+^3} \ell(t_{x_1},t_{x_2})
  	\lambda(W + B_{v_{x_1}a}(0)) p(v_{x_1}) \big (w(t_{x_1})w(t_{x_2}) \big)^\alpha
  	 \theta^2(\diff (t_{x_1}, t_{x_2}) \nu (\diff v_{x_1})\\
  	& \le c_d (1+a^d) M \,  V(W) \,\int_{\R_+^2} \ell(t_{x_1},t_{x_2}) \big (w(t_{x_1})w(t_{x_2}) \big)^\alpha \theta^2(\diff (t_{x_1}, t_{x_2})).
\end{align*}
Using that $w$ is a decreasing function, the result now follows from
\eqref{eq:f1} and \eqref{eq:muint} by noticing that
\begin{align*}
    &\int_{\R_+^2} \ell(t_{x_1},t_{x_2}) \big (w(t_{x_1})w(t_{x_2}) \big)^\alpha \theta^2(\diff (t_{x_1}, t_{x_2})) \\
    &= \bvol^{2} \spm[2d]
      \int_{\R_+^2}
      \int_0^{t_{x_1}\wedge t_{x_2}}
      (t_{x_1}-s)^d(t_{x_2}-s)^d
      \big (w(t_{x_1})w(t_{x_2}) \big)^\alpha 
      \theta(\diff s) \theta^2(\diff (t_{x_1}, t_{x_2}))\\
    &=\bvol^{2} \spm[2d]
      \int_0^\infty \left(\int_s^\infty
      (t-s)^d
      w(t)^\alpha
      \theta(\diff t)\right)^2\theta(\diff s) \\
     & \le \bvol^{2} \spm[2d]
      \int_0^\infty \left(\int_0^\infty
     t^d
     w(t)^{\alpha/2}
     \theta(\diff t)\right)^2 w(s)^\alpha\theta(\diff s) = \bvol^{2}
       \spm[2d] Q(d,\alpha\spm/2)^2 Q(0,\alpha\spm). \qedhere
\end{align*}
\end{proof}

Arguing as in \eqref{eq:Steiner}, we also have
\begin{align}
	\int_{\R_+}\lambda(W+B_{v_xa}(0)) v_x^d p(v_x) \nu(\diff v_x) &\le \omega (1+a^d)  \sum_{i=0}^d V_{d-i}(W) M_{d+i}\label{eq:Stimp'}\\
	&\le c_d (1+a^d) M' \,  V(W),\label{eq:Steiner'}
\end{align}
with
$$
M':=\int_0^\infty (1+v^d) v^d p(v) \nu(\diff v)
=\spm+\spm[2d] + Q(d,\zeta\spm)^5 (\spm[6d]+\spm[7d]).
$$
Note that by positive association, we have $\spm M \le M'$.
\begin{lemma}\label{lem:2}
  For $a \in (0,\infty)$, $\alpha>0$ and $f_\alpha^{(2)}$ defined at \eqref{eq:fa1},
  $$
  \int_{\XX} f_\alpha^{(2)}(\by)\mu(\diff \by) \le 
  C_1 \,V(W)\; \text{ and }\;
  \int_{\XX} f_\alpha^{(2)}(\by)^2\mu(\diff \by) \le
  C_2 \,V(W)
  $$
  for 
  \begin{align*}
    C_1&:=C(1+a^d) M' \, 
    Q(0,\alpha\spm/2)Q(d,\alpha\spm/2)\,,\\
    C_2&:= C (1+a^d) \,M_d M'\,
    Q(0,\alpha\spm/3)^2 Q(2d,\alpha\spm/3),
  \end{align*}
for a constant $C \in (0,\infty)$ depending only on $d$.
\end{lemma}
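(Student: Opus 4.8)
The plan is to bound $\mu f^{(2)}_\alpha$ and $\mu (f^{(2)}_\alpha)^2$ separately, following the pattern of Lemma~\ref{lem:1} but accounting for the structural difference that in $f^{(2)}_\alpha(\by)=w(t_y)^\alpha\int_{\bx\preceq\by}G(\bx)\,\mu(\diff\bx)$ the decaying factor $w(t_y)^\alpha$ sits on the reference point $\by$ rather than on the integrated point. First I would insert the bound \eqref{eq:Gbd} on $G$, and in both cases integrate out the spatial and speed components of the free points using the relation $\bx\preceq\by\Leftrightarrow y\in B_{v_x(t_y-t_x)}(x)$ together with \eqref{eq:intball}, reducing everything to a product of a speed/window factor controlled by the Steiner bound \eqref{eq:Steiner'} and a purely temporal factor controlled by the finiteness of the $Q$-integrals from Lemma~\ref{lemma:psi}.

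For the first moment I would apply Tonelli to write $\mu f^{(2)}_\alpha=\int_\XX G(\bx)\big(\int_{\bx\preceq\by}w(t_y)^\alpha\mu(\diff\by)\big)\mu(\diff\bx)$. The inner integral over $\by$ factorizes: the $v_y$-integral equals $1$, the $y$-integral over $B_{v_x(t_y-t_x)}(x)$ produces $\bvol v_x^d(t_y-t_x)^d$, leaving $\bvol v_x^d\int_{t_x}^\infty(t_y-t_x)^dw(t_y)^\alpha\theta(\diff t_y)$. Using \eqref{eq:Gbd}, the $x$- and $v_x$-integrals, carrying the extra factor $v_x^d$, are handled by \eqref{eq:Steiner'}, yielding the factor $c_d(1+a^d)M'V(W)$. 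For the remaining double time integral I would use that $w$ is decreasing, so $w(t_y)^\alpha\le w(t_x)^{\alpha/2}w(t_y)^{\alpha/2}$ for $t_x\le t_y$, together with $(t_y-t_x)^d\le t_y^d$; this separates the variables and gives $Q(0,\alpha\spm/2)\,Q(d,\alpha\spm/2)$, producing the stated $C_1$.

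For the second moment I would expand the square, obtaining two free points $\bx_1,\bx_2\preceq\by$ and the weight $w(t_y)^{2\alpha}$. The key trick, borrowed from Lemma~\ref{lem:1}, is the asymmetric treatment of the window constraints in $G(\bx_1)G(\bx_2)$: I keep the window on $\bx_1$ and drop it on $\bx_2$, so that the $\bx_2$ speed integral contributes only $M_d=\int v^dp(v)\,\nu(\diff v)$. Integrating the spatial variables sequentially (first $x_2$, then $y$, then $x_1$) via balls gives the factor $\bvol^2 v_{x_1}^dv_{x_2}^d(t_y-t_{x_1})^d(t_y-t_{x_2})^d\lambda(W+B_{v_{x_1}a}(0))$; the $(x_1,v_{x_1})$ part is again bounded by \eqref{eq:Steiner'} and the $v_{x_2}$ part by $M_d$. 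For the resulting triple time integral I would use $w(t_y)^{2\alpha}\le w(t_y)^\alpha$ (as $0\le w\le 1$) and then split $w(t_y)^\alpha=w(t_y)^{\alpha/3}w(t_y)^{\alpha/3}w(t_y)^{\alpha/3}$, transferring two of the factors onto $t_{x_1},t_{x_2}$ via $w(t_y)\le w(t_{x_i})$, together with $(t_y-t_{x_i})^d\le t_y^d$; this yields $Q(2d,\alpha\spm/3)\,Q(0,\alpha\spm/3)^2$ and hence the stated $C_2$.

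The main obstacle, and the genuine difference from Lemma~\ref{lem:1}, is that the points being integrated in $f^{(2)}_\alpha$ carry no decaying weight of their own, so the temporal integrals over $t_{x_i}$ would not converge without first transferring decay from $t_y$. Exploiting the monotonicity of $w$ to push powers of $w(t_y)$ onto the earlier times $t_{x_i}$, together with the bound $w\le 1$, is therefore the crucial step; once the weight is correctly distributed, finiteness of every factor follows from Lemma~\ref{lemma:psi} and assumption (C), and the remaining bookkeeping of numerical constants with the window handling of \eqref{eq:Steiner'} is routine.
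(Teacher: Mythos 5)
Your proposal is correct and follows essentially the same route as the paper's proof: Tonelli plus the bound \eqref{eq:Gbd}, integration of the spatial/speed variables via \eqref{eq:intball} and \eqref{eq:Steiner'} (with the window indicator kept only on $\bx_1$ so that the $\bx_2$ speed integral yields $M_d$), and the transfer of decay from $w(t_y)$ onto the earlier times via monotonicity with the $\alpha/2$ and $\alpha/3$ splits, yielding exactly the stated $C_1$ and $C_2$.
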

\begin{proof}
	By the definition of $f_\alpha^{(2)}$, \eqref{eq:Gbd} and \eqref{eq:Steiner'}, we obtain
	\begin{align*} 
	&\int_{\XX} f_\alpha^{(2)}(\by) \mu(\diff \by)\\
	&\leq 6 \bvol^5  \, 
	\int_\XX\left(\int_{\bx\preceq \by} w(t_y)^{\alpha} \mu(\diff\by)\right)
	 \ind_{x \in W+B_{v_xa}(0)} p(v_x) \mu(\diff \bx)
	\\
	&=6 \bvol^6
	 \int_0^\infty \int_{t_x}^\infty
	w(t_y)^\alpha (t_y-t_x)^d \int_0^\infty \lambda(W+B_{v_xa}(0)) v_x^d p(v_x)  \nu(\diff v_x) \theta(\diff t_y)\theta(\diff t_x)	
	\\
	& \le 6 \bvol^6 c_d (1+a^d) M' \,  V(W)
	\int_0^\infty w(t_x)^{\alpha/2}  \theta(\diff t_x)
	\int_{0}^\infty t_y^d w(t_y)^{\alpha/2} \theta(\diff t_y)
	\\
	&\le 6 \bvol^6 c_d (1+a^d) M' \, 
	Q(0,\alpha\spm/2)Q(d,\alpha\spm/2)\, V(W),
	\end{align*}
	where in the penultimate step, we have used  that $w$ is decreasing.
	This proves the first  assertion.
	
	For the second assertion, using \eqref{eq:Gbd}, we have
	\begin{align}\label{eq:f2}
	\int_\XX f_\alpha^{(2)}(\by)^2&\mu(\diff \by)
	=\int_\XX w(t_y)^{2\alpha} \left(
	\int_{\bx_1\preceq \by}G(\bx_1)\mu(\diff \bx_1)
	\int_{\bx_2\preceq \by}G(\bx_2)\mu(\diff \bx_2)
	\right)\mu(\diff \by)\nonumber\\
	&=\int_\XX\int_\XX\left(
	\int_{\bx_1\preceq \by,\bx_2\preceq\by}
	w(t_y)^{2\alpha}
	\mu(\diff\by)\right)G(\bx_1)G(\bx_2)
	\mu(\diff \bx_1)\mu(\diff \bx_2)\nonumber\\
	&\leq 36 \bvol^{10} \,\int_\XX\int_\XX
	p(v_{x_1})p(v_{x_2})
	\nonumber\\
	&
	\times\ind_{x_1 \in W+B_{v_{x_1}a}(0)}
	\left(\int_{\bx_1\preceq \by,\bx_2\preceq\by}
	w(t_y)^{2\alpha}
	\mu(\diff \by)\right)\mu(\diff \bx_1)\mu(\diff \bx_2).
	\end{align}
	For fixed $\bx_1,t_{x_2}$ and $v_{x_2}$, we have
	\begin{align*}
	&\int_{\R^d}\int_{\bx_1\preceq \by,\bx_2\preceq\by} 
	w(t_y)^{2\alpha} \mu(\diff\by)\diff x_2\\
	&=\int_{t_{x_1}\vee t_{x_2}}^\infty
	w(t_y)^{2\alpha}
	\left(\int_{\R^d}\lambda \big (B_{v_{x_1}(t_y-t_{x_1})}(0)
	\cap B_{v_{x_2}(t_y-t_{x_2})}(x) \big)\diff x\right)\theta(\diff t_y)
	\\
	&=\bvol^2 v_{x_1}^d v_{x_2}^d \int_{t_{x_1}\vee t_{x_2}}^\infty
	(t_y-t_{x_1})^d(t_y-t_{x_2})^d w(t_y)^{2\alpha} \theta(\diff t_y)\,.
	\end{align*}
	Arguing similarly as for $\mu(f_\alpha^{(2)})$ above, 
	\eqref{eq:f2} and \eqref{eq:Steiner'} yield that
	\begin{align*}
	\int_\XX &f_\alpha^{(2)}(\by)^2 \mu(\diff \by)
	\leq 36 \bvol^{12} \,M_d
	 \int_0^\infty \lambda \big (W+B_{v_{x_1}a}(0) \big) v_{x_1}^d p(v_{x_1})
	\nu(\diff v_{x_1})\\
	&\qquad\qquad
	 \times\int_{\R_+^2}  \left(
	\int_{t_{x_1}\vee t_{x_2}}^\infty (t_y-t_{x_1})^d(t_y-t_{x_2})^d
	w(t_y)^\alpha \theta(\diff t_y)\right)\theta^2(\diff (t_{x_1},t_{x_2}))
	\\
	&\leq  36 \bvol^{12}  c_d  (1+a^d) \,M_d M' V(W)
	\int_0^\infty w(t_{x_1})^{\alpha/3} \theta(\diff t_{x_1})\\
	& \qquad \qquad \times \int_0^\infty w(t_{x_2})^{\alpha/3} \theta(\diff t_{x_2}) \int_0^\infty t_y^{2d}w(t_y)^{\alpha/3}\theta(\diff t_y) 
	\\
	&\le 36 \bvol^{12}  c_d  (1+a^d) \,M_d M'\,
	Q(0,\alpha\spm/3)^2 Q(2d,\alpha\spm/3)\, V(W). \qedhere
	\end{align*}
\end{proof}

Before proceeding to bound the integrals of $f^{(3)}$, notice that,
since $\theta$ is a non-null measure,
\begin{align}
\label{eq:qtwo}
\qtwo=\qtwo(\spm):&=\int_0^\infty t^{d-1} e^{-\frac{\alpha\spm}{3}\Lambda(t)} \diff t
=\int_0^\infty t^{d-1} e^{-\frac{\alpha\bvol \spm}{3}\int_0^t (t-s)^d\theta(\diff s)} \diff t
\\
&\leq \int_0^\infty t^{d-1} e^{-\frac{\alpha\bvol \spm}{3}\int_0^{t/2} (t/2)^d\theta(\diff s)} \diff t
=\int_0^\infty t^{d-1} e^{-\frac{\alpha\bvol \spm}{3}\,\theta([0,t/2))(t/2)^d} \diff t<\infty\,.\nonumber
\end{align}

\begin{lemma}\label{lem:3}
	For $a \in (0,\infty)$, $\alpha \in (0,1]$ and $f_\alpha^{(3)}$ defined at \eqref{eq:fa1},
	$$
	\int_{\XX} f_\alpha^{(3)}(\by)\mu(\diff \by) \leq
	C_1\,V(W)
	\quad\text{and}\quad
	\int_{\XX} f_\alpha^{(3)}(\by)^2\mu(\diff \by) \leq
	C_2\, V(W),
	$$
	where
	\begin{align*}
	C_1&:=
	C\, (1+a^d) M' Q(0,\alpha\spm/3)^2 
	\Big[\qtwo +
	Q(2d,\alpha\spm/3)
	\spm \Big]\,,
	\\
	C_2&:=
	C\, (1+a^d) M_d M' (1+\spm[2d]\spm^{-2})Q(0,\alpha\spm/3)^3 
	\left(\qtwo^2+\qtwo  Q(2d,\alpha\spm/3) \spm +Q(2d,\alpha\spm/3)^2 \spm[2d]\right),
	\end{align*}
	for a constant $C \in (0,\infty)$ depending only on $d$.
\end{lemma}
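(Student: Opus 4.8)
The plan is to follow the template of Lemmas~\ref{lem:1} and \ref{lem:2}: insert the uniform bound \eqref{eq:Gbd} on $G$, reduce everything to integrals of $\ind_{x\in W+B_{v_xa}(0)}\,p(v_x)$ against powers of $q$, peel off the factor $V(W)$ via the Steiner bounds \eqref{eq:Steiner}/\eqref{eq:Steiner'}, and recognize the surviving time integrals as the functions $Q$ from Lemma~\ref{lemma:psi} (finite by assumption (B)) together with $\qtwo$ from \eqref{eq:qtwo}. First I would rewrite $q(\bx,\by)$ using \eqref{eq:q}: since the relations $\bx\preceq\bz$ and $\by\preceq\bz$ do not involve the speed $v_z$, the $\nu$-integral over $v_z$ is trivial and
\[
q(\bx,\by)=\int_{t_x\vee t_y}^\infty \lambda\big(B_{v_x(t_z-t_x)}(x)\cap B_{v_y(t_z-t_y)}(y)\big)\,w(t_z)\,\theta(\diff t_z).
\]
I would then bound the lens volume by $\bvol\min\{v_x^d(t_z-t_x)^d,\,v_y^d(t_z-t_y)^d\}$ while retaining the overlap indicator $\ind_{\|x-y\|\le v_x(t_z-t_x)+v_y(t_z-t_y)}$, which is exactly what supplies the spatial decay in $\|x-y\|$ needed to integrate over $\by$.

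For the first assertion I would substitute \eqref{eq:Gbd} and integrate over $\by=(y,t_y,v_y)$ for fixed $\bx$. The spatial integral over $y$ is radial, and it is the combination of the radial weight with the overlap constraint that produces the Lebesgue-type integral $\qtwo=\int_0^\infty t^{d-1}w(t)^{\alpha/3}\diff t$ of \eqref{eq:qtwo} (note the Lebesgue $\diff t$ and the $t^{d-1}$ weight, which cannot come from any $\theta$-time integration). To cope with the outer exponent $\alpha\le 1$ sitting on the whole $t_z$-integral, I would split the weight $w(t_z)^{}$ into matching pieces of exponent $\alpha/3$ — this is the source of the $\spm/3$ appearing throughout the constants — so that the $t_x$, $t_y$ and $t_z$ integrations can be controlled separately. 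Distinguishing whether the smaller of the two balls is the one centred at $x$ or at $y$ yields the two-term bracket $\qtwo+Q(2d,\alpha\spm/3)\,\spm$. The Steiner bound \eqref{eq:Steiner'} (the version carrying the extra factor $v_x^d$) then extracts $M'\,V(W)$, while the two remaining time integrals give $Q(0,\alpha\spm/3)^2$; collecting constants produces $C_1$.

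For the second assertion I would expand $f_\alpha^{(3)}(\by)^2$ as a double integral over $\bx_1,\bx_2$, each contributing its own factor $q(\bx_i,\by)$ and hence its own auxiliary point $\bz_i$. After inserting \eqref{eq:Gbd} for both $G(\bx_1)$ and $G(\bx_2)$ and integrating over $\by$, the cross term coming from the two lens volumes is handled exactly as in \eqref{eq:muint}; this is where the speed moment $\spm[2d]$ and the correction factor $(1+\spm[2d]\spm^{-2})$ enter. The computation then mirrors the first moment, but with one extra time variable and one extra speed integration: this yields the additional factor $M_d$, a third copy of $Q(0,\alpha\spm/3)$, and the bracket $\qtwo^2+\qtwo\,Q(2d,\alpha\spm/3)\,\spm+Q(2d,\alpha\spm/3)^2\,\spm[2d]$, which is precisely the squared analogue of the first-moment bracket. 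Collecting constants gives $C_2$.

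The main obstacle is the outer power $\alpha$ on the $\bz$-integral in $q$. In Lemmas~\ref{lem:1} and \ref{lem:2} the weights $w(t_x)^\alpha$ sit directly in the integrand, so Fubini applies at once; here a single factor $w(t_z)$ is buried inside an integral that is then raised to the power $\alpha$. Reorganizing this so that the spatial decay in $\|x-y\|$ is preserved while all three time integrals remain convergent — and cleanly separating the two geometric regimes that produce $\qtwo$ versus $Q(2d,\alpha\spm/3)\,\spm$ — is the delicate step, and it is exactly where the hypothesis $\alpha\in(0,1]$ (rather than $\alpha>0$ as in the previous two lemmas) is used. Finiteness of every resulting integral is guaranteed by Lemma~\ref{lemma:psi} and \eqref{eq:qtwo}, so no integrability input beyond assumptions (B)--(C) is required.
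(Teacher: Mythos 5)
Your overall scaffolding (insert \eqref{eq:Gbd}, peel off $V(W)$ via the Steiner bounds \eqref{eq:Steiner}--\eqref{eq:Steiner'}, recognize the surviving time integrals as $Q$'s and $\qtwo$) matches the paper, but there is a genuine gap at exactly the point you yourself flag as ``the delicate step'': you never say how the power $\alpha$ gets moved inside the $t_z$-integral defining $q$. The paper's device is to \emph{first} factor out the exponential at $r_0:=\frac{|x-y|}{v_x+v_y}\vee t_x\vee t_y$, writing $q(\bx,\by)\le e^{-\spm\Lambda(r_0)}J$ with $J:=\int_{r_0}^\infty\lambda\big(B_{v_x(t_z-t_x)}(0)\cap B_{v_y(t_z-t_y)}(y-x)\big)e^{-\spm(\Lambda(t_z)-\Lambda(r_0))}\theta(\diff t_z)$, and \emph{then} use $J^\alpha\le\max\{1,J\}\le 1+J$ --- this linearization is the sole place where $\alpha\in(0,1]$ is needed --- which yields \eqref{eq:qbd}: $q^\alpha\le e^{-\alpha\spm\Lambda(r_0)}+\int_{r_0}^\infty\lambda(\cdot)e^{-\alpha\spm\Lambda(t_z)}\theta(\diff t_z)$. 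Your substitute mechanism, ``split the weight $w(t_z)$ into matching pieces of exponent $\alpha/3$,'' operates on the integrand and cannot do anything about an exponent sitting outside the integral; and the naive alternative $q^\alpha\le 1+q$ fails outright, because the constant $1$ carries no decay in $\|x-y\|$ and the subsequent $y$-integration diverges. The point of the factor-out-then-linearize order of operations is that \emph{both} resulting terms inherit the prefactor $e^{-\alpha\spm\Lambda(r_0)}$, which through $r_0\ge\frac{|x-y|}{v_x+v_y}$ and the three-way bound \eqref{eq:wbd} (this, not a splitting of $w(t_z)$, is the true source of the $\alpha\spm/3$ exponents in the first term) provides simultaneously the spatial integrability and the separation of the $t_x,t_y$ integrals.

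As a consequence, your explanation of the two-term bracket is also incorrect: $\qtwo+Q(2d,\alpha\spm/3)\spm$ does not arise from ``distinguishing whether the smaller of the two balls is the one centred at $x$ or at $y$,'' but from the two summands of \eqref{eq:qbd}. The pure-exponential summand, integrated over $y$ in polar coordinates via \eqref{eq:bd1}, gives $d\bvol(v_x+v_y)^d\qtwo$; the lens summand, integrated over $y$ via \eqref{eq:intball} together with $(t_z-t_x)^d(t_z-t_y)^d\le t_z^{2d}$ and $w(t_z)^\alpha\le w(t_z)^{\alpha/3}w(t_x)^{\alpha/3}w(t_y)^{\alpha/3}$ (valid since $t_z\ge t_x\vee t_y$ and $w$ is decreasing), gives $Q(2d,\alpha\spm/3)\spm$. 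Likewise, the three-term bracket in $C_2$ comes from expanding the product of two copies of \eqref{eq:qbd} into the terms $\I_1+2\I_2+\I_3$ of \eqref{eq:f32spl}, not from a cross-term computation ``as in \eqref{eq:muint}.'' So while your reduction steps, the role you assign to $\qtwo$, and the finiteness checks are all sound, the central analytic idea of the proof is missing, and the mechanisms you propose in its place would not produce (and in the case of $q^\alpha$, could not produce) the stated bounds.
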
 
\begin{proof}
  Note that $\bx,\by \preceq \bz$ implies
  $$
  |x-y|\le |x-z|+|y-z| \le t_z(v_x+v_y).
  $$
  For $q$ defined at \eqref{eq:q}, we have
  \begin{align*}
	q(\bx,\by)
	\le e^{-\spm \Lambda(r_0)}
	\int_{r_0}^\infty \lambda \big (B_{v_x(t_z-t_x)}(0) \cap B_{v_y(t_z-t_y)}(y-x) \big)
	e^{-\spm (\Lambda(t_z)-\Lambda(r_0))}  \theta(\diff t_z) \,,
	\end{align*}
	where
	\begin{equation*}
	r_0=r_0(\bx,\by):=\frac{|x-y|}{v_x+v_y}\vee t_x\vee t_y\,.
	\end{equation*}
	Therefore,
	\begin{align}\label{eq:qbd}
	q(\bx,\by)^\alpha
	&\leq e^{-\alpha\spm \Lambda(r_0)}\left(1+
	\int_{r_0}^\infty \lambda \big (B_{v_x(t_z-t_x)}(0) \cap B_{v_y(t_z-t_y)}(y-x) \big)
	e^{-\spm (\Lambda(t_z)-\Lambda(r_0))} \theta(\diff t_z)
	\right)\nonumber
	\\
	&\leq e^{-\alpha\spm \Lambda(r_0)}
	+ \int_{r_0}^\infty \lambda \big (B_{v_x(t_z-t_x)}(0) \cap B_{v_y(t_z-t_y)}(y-x) \big)
	e^{-\alpha\spm \Lambda(t_z)} \theta(\diff t_z)\,.
	\end{align}
	Then, with $f_\alpha^{(3)}$ defined at \eqref{eq:fa1},
	\begin{align}\label{eq:f3spl}
	\int_\XX &f_\alpha^{(3)}(\by)\mu(\by)
	\leq\int_{\XX^2} G(\bx)
	e^{-\alpha\spm \Lambda(r_0)}
	\mu^2(\diff(\bx, \by))\nonumber
	\\
	&\quad +\int_{\XX^2} G(\bx)
	\int_{r_0}^\infty\lambda \big (B_{v_x(t_z-t_x)}(0) \cap B_{v_y(t_z-t_y)}(y-x) \big)
	e^{-\alpha\spm \Lambda(t_z)} \theta(\diff t_z)
	\mu^2(\diff(\bx, \by)).
	\end{align}
	Since $\Lambda$ is increasing, 
	\begin{equation}\label{eq:wbd}
	\exp\{-\alpha\spm \Lambda(r_0(\bx,\by))\}
	\leq \exp\left\{-\frac{\alpha\spm }{3}
	\left[\Lambda\left(\frac{|x-y|}{v_x+v_y}\right)
	+\Lambda(t_x)+\Lambda(t_y)\right]\right\},
	\end{equation}
	and, by a change of variable and passing to polar coordinates, we obtain
	\begin{equation}\label{eq:bd1}
	\int_{\R^d}e^{-\frac{\alpha\spm }{3}
		\Lambda\left(\frac{|x|}{v_x+v_y}\right)}\,\diff x
	\leq d\bvol (v_x+v_y)^d \int_0^\infty \rho^{d-1}
	e^{-\frac{\alpha\spm }{3}\Lambda(\rho)}\diff\rho
	= d\bvol (v_x+v_y)^d \qtwo \,.
	\end{equation}
	Thus, using \eqref{eq:Gbd}, \eqref{eq:wbd} and \eqref{eq:bd1}, we
	can bound the first summand on the right-hand side of \eqref{eq:f3spl}
	as
	\begin{align*}
	\int_{\XX^2} G(\bx)
	&e^{-\alpha\spm \Lambda(r_0)}
	\mu^2(\diff(\bx, \by))
	\leq
	6 \bvol^5
   \int_0^\infty e^{-\frac{\alpha\spm }{3}\Lambda(t_x)}
   \diff t_x
	\int_0^\infty e^{-\frac{\alpha\spm }{3}\Lambda(t_y)} \diff t_y
	\\
	&\quad \times  \int_{\R^d}\ind_{x \in W+B_{v_x a}(0)} \diff x
   \iint_{\R_+^2\times\R^d} p(v_x)
   e^{-\frac{\alpha\spm }{3}\Lambda\left(\frac{|x-y|}{v_x+v_y}\right)}
	\diff y\,\nu^2(\diff (v_x, v_y))
	\\
	&\leq
	6 \bvol^6 d Q(0,\alpha\spm/3)^2 
	\qtwo\,
	\int_{\R_+^2}\lambda \big (W+B_{v_x a}(0) \big) p(v_x)(v_x+v_y)^d 
	\nu^2(\diff (v_x, v_y))
	\\
	&\leq 2^{d+3} \bvol^6 d c_d (1+a^d) M' \qtwo Q(0,\alpha\spm/3)^2 
	 \,V(W)\,,
	\end{align*}
	where for the final step, we have used Jensen's inequality, \eqref{eq:Steiner} and \eqref{eq:Steiner'},
        and that $\spm M \le M'$.  Arguing similarly for
        the second summand in \eqref{eq:f3spl}, using \eqref{eq:Gbd} and that $r_0 \ge t_x \vee t_y$
        in the first, \eqref{eq:intball} in the second, and \eqref{eq:Steiner'} in the final step, we
        obtain
	\begin{align*}
	&\int_{\XX^2} G(\bx)
	\int_{r_0}^\infty\lambda \big (B_{v_x(t_z-t_x)}(0) \cap B_{v_y(t_z-t_y)}(y-x) \big)
	e^{-\alpha\spm \Lambda(t_z)} \theta(\diff t_z)
	\mu^2(\diff(\bx, \by))
	\\
	&\leq 6 \bvol^5\,
	\int_{\R_+^2} \int_{\R_+^2} \lambda \big ( W+B_{v_x a}(0) \big) p(v_x) \int_{t_x\vee t_y}^\infty 
	w(t_z)^\alpha
	\\
	&\qquad \times \left(
	\int_{\R^d}\lambda \big (B_{v_x(t_z-t_x)}(0) \cap B_{v_y(t_z-t_y)}(y) \big)\diff y
	\right) \theta(\diff t_z) \theta^2( \diff (t_x, t_y))\,\nu^2(\diff (v_x, v_y))
	\\
	&\leq 6 \bvol^7 \,
	\int_{\R_+^2} \lambda \big ( W+B_{v_x a}(0) \big) p(v_x) v_x^d v_y^d\,\nu^2(\diff (v_x, v_y))
	\\
	&\qquad\times\int_{\R_+^3} t_z^{2d}
	w(t_z)^{\alpha/3}w(t_x)^{\alpha/3}w(t_y)^{\alpha/3}
	\theta^3(\diff (t_z, t_x, t_y))
	\\
	&\leq
	6 \bvol^7 c_d \, (1+a^d) \, Q(0,\alpha\spm/3)^2 
	Q(2d,\alpha\spm/3)
	\spm M'\,V(W).
	\end{align*}
	This concludes the proof of the first assertion.
	
	Next, we prove the second assertion. For ease of notation, we
        drop obvious subscripts and write $\by=(y,s,v)$,
        $\bx_1=(x_1,t_1,u_1)$ and $\bx_2=(x_2,t_2,u_2)$. Using
        \eqref{eq:qbd}, write
	\begin{align}\label{eq:f32spl}
	\int_\XX f_\alpha^{(3)}(\by)^2\mu(\by)
	&=\int_{\XX^3} G(\bx_1)G(\bx_2)
	q(\bx_1,\by)^\alpha q(\bx_2,\by)^\alpha
	\mu^3(\diff (\bx_1, \bx_2, \by))\nonumber
	\\
	&\leq \int_{\XX^3} G(\bx_1)G(\bx_2)
	(\I_1+2\I_2+\I_3)\mu^3(\diff (\bx_1, \bx_2, \by))\,,
	\end{align}
	with
	\begin{align*}
	\I_1&=\I_1(\bx_1,\bx_2,\by)
	:=\exp\Big\{-\alpha\spm \big[
	\Lambda(r_0(\bx_1,\by))+\Lambda(r_0(\bx_2,\by))
	\big]\Big\},
	\\
	\I_2&=\I_2(\bx_1,\bx_2,\by)
	:=e^{-\alpha\spm \Lambda(r_0(\bx_1,\by))}
	\int_{s\vee t_2}^\infty\lambda(B_{u_2(r-t_2)}(0) 
	\cap B_{v(r-s)}(y-x_2))e^{-\alpha\spm \Lambda(r)} \theta( \diff r),
	\\
	\I_3&=\I_3(\bx_1,\bx_2,\by)
	:=\int_{s\vee t_2}^\infty\lambda \big (B_{u_2(r-t_2)}(0) 
	\cap B_{v(r-s)}(y-x_2) \big)e^{-\alpha\spm \Lambda(r)} \theta(\diff  r)\; 
	\\
	&\qquad\qquad\qquad\qquad\qquad\qquad
	\times\int_{s\vee t_1}^\infty\lambda \big (B_{u_1(\rho-t_1)}(0) 
	\cap B_{v(\rho-s)}(y-x_1) \big)e^{-\alpha\spm \Lambda(\rho)} \theta(\diff \rho)\,.
	\end{align*}
	
	\noindent By \eqref{eq:bd1}, 
	\begin{align*}
	&\iint_{\R^{2d}}\exp\Big\{-\frac{\alpha\spm }{3}\left[
	\Lambda\left(\frac{|y|}{u_1+v}\right)
	+\Lambda\left(\frac{|x-y|}{u_2+v}\right)\right]\Big\}\,\diff x \diff y
	\\
	&\leq\int_{\R^d} \exp\Big\{-\frac{\alpha\spm }{3}
	\Lambda\left(\frac{|y|}{u_1+v}\right)\Big\}\diff y
	\int_{\R^d} \exp\Big\{-\frac{\alpha\spm }{3}
	\Lambda\left(\frac{|x|}{u_2+v}\right)\Big\}\diff x
	\\
	&\leq d^2\bvol^2 (u_1+v)^d (u_2+v)^d
	\qtwo ^2.
	\end{align*}
	Hence, using \eqref{eq:Gbd} and \eqref{eq:wbd} for the first
        step, we
        have
	\begin{align*}
	&\int_{\XX^3} G(\bx_1)G(\bx_2) \I_1(\bx_1,\bx_2,\by)\ \mu^3(\diff (\bx_1,\bx_2,\by))
	\\
	&\leq 36 \bvol^{10}\,
	\int_{\R^{d}} \ind_{x_1 \in W+B_{u_1 a}(0)} \diff  x_1
	\int_{\R_+^3} e^{-\frac{\alpha\spm }{3}
		\left[\Lambda(t_1)+\Lambda(t_2)+2\Lambda(s)\right]}
	\theta^3(\diff (t_1, t_2,s))
	\\
	&\qquad\qquad\times
	\iint_{\R_+^3\times(\R^d)^2}p(u_1) p(u_2)
	e^{-\frac{\alpha\spm}{3}\left[
		\Lambda\left(\frac{|x_1-y|}{u_1+v}\right)
		+\Lambda\left(\frac{|x_2-y|}{u_2+v}\right)\right]}
	\,\diff y\,\diff x_2\,\nu^3(\diff (u_1,u_2,v))
	\\
	&\leq 36 \bvol^{12}  d^2\,\qtwo ^2 
	Q(0,\alpha\spm/3)^{3} \\
	& \qquad\qquad \times \int_{\R_+^3} \lambda \big (W+B_{u_1 a}(0) \big)\,(u_1+v)^d (u_2+v)^d
	p(u_1) p(u_2)
	\nu^3(\diff (u_1,u_2,v))
	\\
	&\leq  c_1 \,\qtwo ^2 
	Q(0,\alpha\spm/3)^{3} (1+a^d)  (1+\spm[2d]\spm^{-2}) M_d M' \, V(W)\,
	\end{align*}
	for some constant $c_1 \in (0,\infty)$ depending only on
        $d$. Here we have used monotonicity of $Q$ with respect to its
        second argument in the penultimate step and, in the final
        step, Jensen's inequality and \eqref{eq:Steiner'} along with the
        fact that
        \begin{equation*} 
        \int_{\R_+^2} (1+\spm^{-1} v^d) (u_2^d+v^d)
        p(u_2)
        \nu^2(\diff (u_2,v)) \le C (1+\spm^{-2}\spm[2d]) M_d
        \end{equation*}
        for some constant $C \in (0,\infty)$ depending only on $d$.
	
	Next, we bound the second summand in \eqref{eq:f32spl}. Using
        \eqref{eq:intball} in the second step, monotonicity of
        $\Lambda$ and \eqref{eq:wbd} in the third step and
        \eqref{eq:bd1} in the final one, we have
	\begin{align*}
	&\iint_{\R^{2d}} \I_2(\bx_1,\bx_2,\by) \diff x_2 \diff y
	\\
	&=\int_{\R^{d}}e^{-\alpha\spm \Lambda(r_0(\bx_1,\by))} \diff y
	\int_{s\vee t_2}^\infty\int_{\R^d}  
	\lambda \big (B_{u_2(r-t_2)}(0) \cap B_{v(r-s)}(y-x_2) \big) \diff x_2\,
	e^{-\alpha\spm \Lambda(r)} \theta(\diff r)
	\\
	&=\bvol^2 u_2^d v^d \int_{\R^{d}}
	e^{-\alpha\spm \Lambda(r_0(\bx_1,\by))} \diff y
	\int_{s\vee t_2}^\infty (r-t_2)^d (r-s)^d\,
	e^{-\alpha\spm \Lambda(r)} \theta(\diff r)
	\\
	&\le \bvol^2 u_2^d v^d \exp\left\{-\frac{\alpha\spm }{3}\left[
	\Lambda(t_1)+\Lambda(t_2)+2\Lambda(s)\right]\right\} 
	\\
	&\qquad \qquad \qquad  \times \int_0^\infty r^{2d}\,
	e^{-\alpha\spm \Lambda(r)/3} \theta(\diff r)
	\int_{\R^d} \exp\Big\{-\frac{\alpha\spm }{3}
	\Lambda\left(\frac{|x_1-y|}{u_1+v}\right)\Big\}\,\diff y
	\\
	&=d \bvol^3 \,\qtwo Q(2d,\alpha\spm/3)u_2^d v^d (u_1+v)^d
	\exp\left\{-\frac{\alpha \spm }{3}\left[
	\Lambda(t_1)+\Lambda(t_2)+2\Lambda(s)\right]\right\}\,.
	\end{align*}
	Therefore, arguing similarly as before, we obtain
	\begin{align*}
	&\int_{\XX^3} G(\bx_1)G(\bx_2) \I_2(\bx_1,\bx_2,\by)
	\ \mu^3(\diff (\bx_1, \bx_2, \by))
	\\
	&\leq
	36 \bvol^{10} \,
	\int_{\R^{d}}\ind_{x_1 \in W+B_{u_1 a}(0)} \diff x_1
	\iint_{\R_+^6} p(u_1)p(u_2) \\
	&  \qquad \qquad\qquad
   \times\left( \iint_{\R^{2d}} \I_2(\bx_1,\bx_2,\by)
   \diff x_2\,\diff y \right)
	\theta^3 (\diff (t_1,t_2,s))\nu^3 (\diff (u_1,u_2,v))
	\\
	&\leq  36 \bvol^{13}  d \,\qtwo Q(2d,\alpha\spm/3)\, \int_{\R_+^3}
	e^{-\frac{\alpha\spm }{3}\left[
		\Lambda(t_1)+\Lambda(t_2)+2\Lambda(s)\right]}
	\theta^3 (\diff( t_1,t_2,s)) 
	\\
	&\qquad\qquad\qquad	\times \int_{\R_+^3} \lambda \big (W+B_{u_1 a}(0) \big) u_2^d
	 v^d (u_1+v)^d p(u_1)p(u_2)
	\nu^3 (\diff (u_1,u_2,v))
	\\
	&\leq c_2 \,	\qtwo Q(2d,\alpha\spm/3)
	Q(0,\alpha\spm/3)^{3} (1+a^d) (1+\spm[2d]\spm^{-2}) \spm M_d M' \, V(W)\,
	\end{align*}
	for some constant $c_2 \in (0,\infty)$ depending only on $d$, where for the final step we have used
	\begin{equation*} 
		\int_{\R_+^2} (1+\spm^{-1} v^d) u_2^d v^d
		p(u_2)
		\nu^2(\diff (u_2,v)) \le C' (1+\spm^{-2}\spm[2d]) \spm M_d
	\end{equation*}
	for some constant $C' \in (0,\infty)$ depending only on $d$.

	Finally, we bound the third summand in \eqref{eq:f32spl}. Arguing as above, 
	\begin{align*}
	&\iint_{\R^{2d}} \I_3(\bx_1,\bx_2,\by)\,\diff x_2\,\diff y
	\\
	&=\int_{s\vee t_1}^\infty
	\left(\int_{\R^d}
	\lambda \big (B_{u_1(\rho-t_1)}(0)\cap B_{v(\rho-s)}(y-x_1) \big) \diff y
	\right)
	e^{-\alpha\spm \Lambda(\rho)} \theta(\diff \rho)
	\\
	&\qquad\qquad \qquad\times\int_{s\vee t_2}^\infty
	\left(\int_{\R^d}
	\lambda \big (B_{u_2(r-t_2)}(0) \cap B_{v(r-s)}(y-x_2) \big) \diff x_2
	\right)
	e^{-\alpha\spm  \Lambda(r)} \theta(\diff r)
	\\
	&=\bvol^4 u_1^d u_2^d v^{2d}
	\int_{s\vee t_1}^\infty
	(\rho-t_1)^d(\rho-s)^d
	e^{-\alpha\spm \Lambda(\rho)} \theta(\diff \rho)
	\int_{s\vee t_2}^\infty
	(r-t_1)^d(r-s)^d
	e^{-\alpha\spm \Lambda(r)} \theta(\diff r)
	\\
	&\leq \bvol^4 u_1^d u_2^d v^{2d} \left(\int_0^\infty r^{2d}\,
	e^{-\alpha\spm \Lambda(r)/3} \theta(\diff r)\right)^2 
	\exp\Big\{
	-\frac{\alpha}{3}\spm \big[
	\Lambda(t_1)+\Lambda(t_2)+2\Lambda(s)
	\big]\Big\}
	\\
	&\le  \bvol^4\,Q(2d,\alpha\spm/3)^2 u_1^d u_2^d v^{2d}
	\exp\Big\{
	-\frac{\alpha}{3} \spm \big[
	\Lambda(t_1)+\Lambda(t_2)+2\Lambda(s)
	\big]\Big\}\,.
	\end{align*}
	Thus, 
	\begin{align*}
	&\int_{\XX^3} G(\bx_1)G(\bx_2) \I_3(\bx_1,\bx_2,\by)
	\ \mu^3(\diff( \bx_1,\bx_2,\by))
	\\
	&\leq 36 \bvol^{10}\,
	\int_{\R^{d}} \ind_{x_1 \in W+B_{u_1 a}(0)}  \diff x_1
	\iint_{\R_+^6} p(u_1) p(u_2)
	\\
	&\qquad\qquad\qquad
	\times\left( \iint_{\R^{2d}} \I_3(\bx_1,\bx_2,\by) \diff x_2\,\diff y \right)
	\theta^3 (\diff (t_1,t_2,s))\nu^3 (\diff (u_1, u_2, v))
	\\
	&\leq 36 \bvol^{14} \,Q(2d,\alpha\spm/3)^2
	\int_{\R_+^3} \lambda \big (W+B_{u_1 a}(0) \big)
	p(u_1) p(u_2)
	u_1^d u_2^d v^{2d}
	\nu^3 (\diff (u_1, u_2, v))
	\\
	&\qquad\qquad\qquad\times
	\int_{\R_+^3}
	e^{-\frac{\alpha\spm }{3}\left[
		\Lambda(t_1)+\Lambda(t_2)+2\Lambda(s)\right]}
	\theta^3 (\diff (t_1,t_2,s)) 
	\\
	&\leq c_3
	\,Q(2d,\alpha\spm/3)^2 Q(0,\alpha\spm/3)^{3} (1+a^d)
	\spm[2d] M_d M'\, V(W)\,,
	\end{align*}
	for some constant $c_3 \in (0,\infty)$ depending only on $d$. Combining the bound for the summands on the right-hand
	side of \eqref{eq:f32spl} yields the desired conclusion.
\end{proof}

To compute the bounds in \eqref{eq:Wass} and \eqref{eq:Kol}, we now
only need to bound $\mu ((\kappa+g)^{2\beta} G)$.

\begin{lemma}
  \label{lem:4}
  For $a \in (0,\infty)$ and $\alpha \in (0,1]$,
  $$
  \mu ((\kappa+g)^{\alpha} G) \le 
  C_1 \, V(W)\,,	
  $$
  where
  \begin{equation*}
    C_1:= C \, (1+a^d) Q(0,\alpha \zeta\spm/2) [M
    +  (M + M')Q(d,\zeta\spm/2)^\alpha]
  \end{equation*}
for a constant $C \in (0,\infty)$ depending only on $d$.
\end{lemma}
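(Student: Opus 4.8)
The plan is to exploit that $\alpha\in(0,1]$ and invoke the subadditivity inequality $(a+b)^\alpha\le a^\alpha+b^\alpha$ for $a,b\ge0$, which yields the decomposition
\begin{equation*}
  \mu\big((\kappa+g)^\alpha G\big)\le \mu(\kappa^\alpha G)+\mu(g^\alpha G).
\end{equation*}
I would bound the two summands separately, and they will produce respectively the $M$-term and the $(M+M')Q(d,\zeta\spm/2)^\alpha$-term inside the bracket defining $C_1$, with the common prefactor $Q(0,\alpha\zeta\spm/2)$ arising in both.

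For the first summand I would insert the explicit form $\kappa(\by)^\alpha=\ind_{y\in W}\ind_{t_y\le a}w(t_y)^\alpha$ together with the pointwise bound \eqref{eq:Gbd} on $G$. Since $\kappa$ already carries the factor $\ind_{y\in W}$ and $W\subseteq W+B_{v_ya}(0)$, the indicator coming from $G$ is absorbed, and the resulting integral factorizes over the location, time and speed components. The location integral gives $\lambda(W)\le V(W)$, the speed integral gives $M_0\le M$, and the time integral is at most $\int_0^\infty w(t)^\alpha\theta(\diff t)=Q(0,\alpha\spm)\le Q(0,\alpha\zeta\spm/2)$, the last step using that $Q(0,\cdot)$ is decreasing and $\alpha\zeta\spm/2\le\alpha\spm$. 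Bounding $1\le 1+a^d$ then matches the $M$-term of $C_1$.

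The delicate summand is $\mu(g^\alpha G)$: the crude bound $g(\by)\le\bvol Q(d,\zeta\spm)v_y^d$ carries no decay in $t_y$, so the ensuing $\theta$-integral over $t_y$ would diverge. The key step is to retain a factor of $w(t_y)$ inside $g$. Starting from $g(\by)=\bvol v_y^d\int_{t_y}^\infty(t_x-t_y)^d w(t_x)^\zeta\,\theta(\diff t_x)$ and using that $w$ is decreasing, so that $w(t_x)^\zeta\le w(t_y)^{\zeta/2}w(t_x)^{\zeta/2}$ for $t_x\ge t_y$, together with $(t_x-t_y)^d\le t_x^d$, I obtain the refined bound
\begin{equation*}
  g(\by)\le \bvol\, Q(d,\zeta\spm/2)\, v_y^d\, w(t_y)^{\zeta/2}.
\end{equation*}
Raising this to the power $\alpha$ and combining with \eqref{eq:Gbd}, the $t_y$-integral becomes $\int_0^\infty w(t_y)^{\alpha\zeta/2}\,\theta(\diff t_y)=Q(0,\alpha\zeta\spm/2)$, exactly the prefactor in $C_1$, while the factor $Q(d,\zeta\spm/2)^\alpha$ emerges from the refined spatial estimate.

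Finally, the remaining location-and-speed integral $\int_{\R_+}\lambda(W+B_{v_ya}(0))\,v_y^{d\alpha}p(v_y)\,\nu(\diff v_y)$ is controlled by the Steiner-type bounds \eqref{eq:Steiner} and \eqref{eq:Steiner'}: the elementary inequality $v_y^{d\alpha}\le 1+v_y^d$ (valid since $\alpha\le1$) splits it into the $M$ and $M'$ contributions, giving $c_d(1+a^d)(M+M')V(W)$. Collecting all constants, which depend only on $d$, yields the asserted estimate. The main obstacle is precisely the extraction of the $w(t_y)^{\zeta/2}$ factor from $g$: without it the $t_y$-integral fails to converge, and it is this single refinement that simultaneously produces both the $Q(0,\alpha\zeta\spm/2)$ and the $Q(d,\zeta\spm/2)^\alpha$ factors in $C_1$.
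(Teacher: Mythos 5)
Your proposal is correct and follows essentially the same route as the paper: the subadditive split $\mu((\kappa+g)^\alpha G)\le\mu(\kappa^\alpha G)+\mu(g^\alpha G)$, the refined bound $g(\by)\le\bvol\,Q(d,\zeta\spm/2)\,v_y^d\,w(t_y)^{\zeta/2}$ obtained from the monotonicity of $\Lambda$ (the paper phrases this as a bound on $\int_0^\infty\psi(t)^\alpha\,\theta(\diff t)$, but it is the same estimate), and the Steiner-formula bounds \eqref{eq:Steiner} and \eqref{eq:Steiner'} after splitting $v_y^{d\alpha}\le 1+v_y^d$. Your handling of the $\kappa$-term (keeping $\ind_{y\in W}$ rather than applying Steiner to $W+B_{v_ya}(0)$) is an immaterial variant, and you correctly identify the extraction of the $w(t_y)^{\zeta/2}$ factor as the key step.
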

\begin{proof}
  Define the function
  \begin{displaymath}
    \psi(t):=\int_t^\infty(s-t)^d
    e^{-\zeta\spm \Lambda(s)}
    \theta(\diff s)\,,
  \end{displaymath}
  so that $g(\bx)=\bvol v_x^d\psi(t_x)$. 
  By subadditivity, it suffices to separately bound
  $$
  \int_\XX \kappa^{\alpha}(\bx)G(\bx)\mu(\diff \bx)
  \quad\text{and}\quad
  \int_\XX g(\bx)^{\alpha}G(\bx)\mu(\diff \bx)\,.
  $$
  By \eqref{eq:Gbd} and \eqref{eq:Steiner},
  \begin{align*}
    \int_\XX \kappa^{\alpha}(\bx)G(\bx)\mu(\diff \bx)
    &\leq 6 \bvol^5 \int_\XX \ind_{x \in W+B_{v_x a}(0)}p(v_x)
      e^{-\alpha\spm \Lambda(t_x)} \,\diff x\,\theta(\diff t_x)\,\nu(\diff v_x)
    \\
    &\le 6\bvol^5 c_d (1+a^d) Q(0,\alpha \zeta\spm/2)
       M\, V(W)\,.
  \end{align*}
	For the second integral, using \eqref{eq:Steiner'} write
	\begin{align*}
	\int g(\bx)^{\alpha}G(\bx)\mu(\diff \bx)
	&\leq 6 \bvol^{5+\alpha}
	\int_0^\infty\int_0^\infty \psi(t_x)^\alpha  \lambda(W+B_{v_x a}(0)) v_x^{\alpha d}
	p(v_x)\nu(\diff v_x)\theta(\diff t_x)
	\\
	&\le 6 \bvol^{6} c_d (1+a^d) (M+M') \, V(W) \,\int_0^\infty \psi(t_x)^\alpha \theta(\diff t_x)
	\end{align*}
	Note that,
	\begin{align*}
	\int_0^\infty\psi(t)^\alpha \theta(\diff t)
	&=\int_0^\infty \left(\int_t^\infty (s-t)^de^{-\zeta\spm\Lambda(s)}\theta(\diff s)\right)^\alpha \theta(\diff t)
	\\
	&\leq
	\int_0^\infty e^{-\alpha \zeta\spm\Lambda(t)/2} \theta(\diff t)
	\left(
	\int_0^\infty s^d e^{-\zeta\spm\Lambda(s)/2}\theta(\diff s)
	\right)^\alpha
	\\
	&=Q(0,\alpha \zeta\spm/2)Q(d,\zeta\spm/2)^\alpha\,,
	\end{align*}
	where we have used the monotonicity of $\Lambda$ in the second step.
	Combining with the above bounds yield the result.
\end{proof}

\begin{proof}[Proofs of Theorems~\ref{thm:birth} and \ref{thm:scale}:]
  Theorem~\ref{thm:birth} follows from \eqref{eq:Wass} and \eqref{eq:Kol} upon
  using Lemmas~\ref{lem:1}, \ref{lem:2}, \ref{lem:3} and \ref{lem:4}
  and including the factors involving the moments of the speed into
  the constants. 

  The upper bound in Theorem~\ref{thm:scale} follows by combining
  Theorem~\ref{thm:birth} and Proposition~\ref{prop:var}, upon
    noting that $V(n^{1/d} W) \le n V(W)$ for $n \in \N$.
	
  The optimality of the bound in Theorem~\ref{thm:scale} in the
  Kolmogorov distance follows by a general argument employed in the
  proof of \cite[Theorem~1.1, Eq.~(1.6)]{En81}, which shows that the
  Kolmogorov distance between any integer-valued random variable,
  suitably normalized, and a standard normal random variable is always
  lower bounded by a universal constant times the inverse of the
  standard deviation, see Section~6 therein for further details. The
  variance upper bound in \eqref{eq:vgam} now yields the result.
\end{proof}

\begin{proof}[Proof of Theorem~\ref{thm:speed}]
	Let $\theta$ be given at \eqref{eq:10}.
	Then, as in the proof of Proposition~\ref{prop:var},
	\begin{align*}
	\Lambda(t)=B\, \bvol t^{d+\tau+1},
	\end{align*}
	where $B:=B(d+1,\tau+1)$.  By \eqref{eq:1a}, for $x \in \R_+$
        and $y>0$,
	\begin{equation}\label{eq:q0}
	Q(x,y)=
	\int_0^\infty t^{x+\tau} e^{-y \bvol B\, t^{d+\tau+1}} \diff t
	=
	\frac{(y\bvol B)^{-\frac{x+\tau+1}{d+\tau+1}}}{d+\tau+1}
	\Gamma\left(\frac{x+\tau+1}{d+\tau+1}\right)
	=C_1 y^{-\frac{x+\tau+1}{d+\tau+1}}
	\end{equation}
for some constant $C_1 \in (0,\infty)$ depending only on $x, \tau$ and $d$.
	Then using the inequality
	$\nu_{\delta} \nu_{7d-\delta} \le \nu_{7d}$ for any $\delta \in [0,7d]$, we have that for any $u \in [0,2d]$,
	$$
		M_u= \int_{\R_+} v^u p(v) \nu(\diff v)=\spm[u]+ Q(d,\zeta\spm)^5 \spm[5d+u]
		\le C_2 \spm[u](1+\spm[5d+u]\spm[u]^{-1}\spm^{-5}) \le C_2 \spm[u](1+\spm[7d]\spm^{-7})
	$$
        for $C_2 \in (0,\infty)$ depending only on $\tau$ and
        $d$, where in the last step, we have used positive
          association and the Cauchy-Schwartz inequality to obtain
$$
\spm[7d] \spm^{-7} \ge \spm[5d+u] \spm[u]^{-1} \spm^{-5} \spm[2d-u] \spm[u] \spm^{-2} \ge \spm[5d+u] \spm[u]^{-1} \spm^{-5}.
$$ 
In particular,
$$
M_0 \le C_2 (1+\spm[7d]\spm^{-7}), \quad 
\text{and} \quad 
M \le C_2 (1+\spm)(1+\spm[7d]\spm^{-7}).
$$
	Similarly, by
        \eqref{eq:qtwo},
	\begin{align*}
	\qtwo:=
	\frac{1}{d+\tau+1}
	\Gamma\left(\frac{d}{d+\tau+1}\right)
	(\alpha B \bvol \spm/3)^{-\frac{d}{d+\tau+1}}
	=C_3 \spm^{-\frac{d}{d+\tau+1}}
	\end{align*}
for some constant $C_3 \in (0,\infty)$ depending only on $\alpha, \tau$ and $d$.
	Also by \eqref{eq:q0}, for $b>0$, 
        $$
        Q(x, by)=b^{-\frac{x+\tau+1}{d+\tau+1}}Q(x,y).
        $$
        Recall the parameters $p=1$, $\beta=1/36$ and
        $\zeta=1/50$. We will need a slightly refined version of 
        Lemmas~\ref{lem:1}--\ref{lem:3} that uses \eqref{eq:Stimp} and \eqref{eq:Stimp'} instead of \eqref{eq:Steiner} and \eqref{eq:Steiner'}, respectively. Arguing exactly as in Lemmas~\ref{lem:1}--\ref{lem:3}, this yields
    \begin{align*}
    \int_{\XX} f_\alpha^{(1)}(\by)\mu(\diff \by) &\le C (1+a^d) \frac{Q(0,\alpha \spm/2)}{\alpha} \sum_{i=0}^d V_{d-i}(W) M_i,\\
     \int_{\XX} f_\alpha^{(2)}(\by)\mu(\diff \by) &\le C(1+a^d) \, 
     Q(0,\alpha\spm/2)Q(d,\alpha\spm/2)  \sum_{i=0}^d V_{d-i}(W) M_{d+i},\\
      \int_{\XX} f_\alpha^{(3)}(\by)\mu(\diff \by) &\le C (1+a^d) Q(0,\alpha\spm/3)^2 
      \Big[\qtwo +
      Q(2d,\alpha\spm/3)
      \spm \Big] \sum_{i=0}^d V_{d-i}(W) M_{d+i},\\
      \mu ((\kappa+g)^{\alpha} G) &\le 
      C (1+a^d) Q(0,\alpha \zeta\spm/2) \\
      & \quad \times \left[\sum_{i=0}^d V_{d-i}(W) M_i+  Q(d,\zeta\spm/2)^\alpha \sum_{i=0}^d V_{d-i}(W) M_{\alpha d+i}\right]\,,
    \end{align*}
and
    \begin{align*}
	\int_{\XX} f_\alpha^{(1)}(\by)^2\mu(\diff \by) &\le C  (1+a^d) \, M_0 \spm[2d] Q(d,\alpha\spm/2)^2 Q(0,\alpha\spm) \sum_{i=0}^d V_{d-i}(W) M_i,\\
	\int_{\XX} f_\alpha^{(2)}(\by)^2\mu(\diff \by) \le &C (1+a^d) \,M_d\,
	Q(0,\alpha\spm/3)^2 Q(2d,\alpha\spm/3) \sum_{i=0}^d V_{d-i}(W) M_{d+i},\\
	\int_{\XX} f_\alpha^{(3)}(\by)^2 \mu(\diff \by) \le &C (1+a^d) M_d (1+\spm[2d]\spm^{-2})Q(0,\alpha\spm/3)^3 \\
	& \quad \times \left(\qtwo^2+\qtwo  Q(2d,\alpha\spm/3)\spm+Q(2d,\alpha\spm/3)^2 \spm[2d]\right) \sum_{i=0}^d V_{d-i}(W) M_{d+i},
\end{align*}
where $C \in (0,\infty)$ is a constant depending only on $d$.
    
     	These modified bounds in combination
        with the above estimates along with the fact that $\spm[i] \le \spm^{-1} \spm[d+i]$ yield that
        there exists a constant $C$ depending only on $d$ and $\tau$
        such that for $i \in \{1,2,3\}$,
        \begin{equation}\label{eq:est1}
        \int_{\XX} f_{2\beta}^{(i)}(\by)\mu(\diff \by) \le C (1+a^d) \spm^{-\frac{\tau+1}{d+\tau+1}-1}(1+\spm[7d]\spm^{-7}) \sum_{i=0}^d V_{d-i}(W) \spm[d+i].
        \end{equation}
       Also, note that by H\"older's inequality and positive association, for $i=0, \ldots, d$, we have 
       $$
       \spm^{-\alpha} \spm[\alpha d+i] \le \spm^{-\alpha} \spm[i]^{1-\alpha} \spm[d+i]^{\alpha} \le \spm^{-1} \spm[d+i].
       $$
       Thus, combining with the estimates above yields that
       there exists a constant $C$ depending only on $d$ and $\tau$ such that
        \begin{equation}\label{eq:est2}
        \mu ((\kappa+g)^{2\beta} G) \le C (1+a^d) \spm^{-\frac{\tau+1}{d+\tau+1}-1}(1+\spm[7d]\spm^{-7}) \sum_{i=0}^d V_{d-i}(W) \spm[d+i].
    \end{equation}
	Arguing similarly, we also obtain that there exists a constant $C$ depending only on $d$ and $\tau$
such that
	\begin{align*}
	\int_{\XX} f_\beta^{(1)}(\by)^2\mu(\diff \by)
	&\le C  (1+a^d) \, \spm^{-\frac{\tau+1}{d+\tau+1}-3} \spm[2d] (1+\spm[7d]\spm^{-7})^2  \sum_{i=0}^d V_{d-i}(W) \spm[d+i]\,,\\
	\int_{\XX} f_\beta^{(2)}(\by)^2\mu(\diff \by) &\le C (1+a^d) \,\spm^{-\frac{\tau+1}{d+\tau+1}-1}
	(1+\spm[7d]\spm^{-7})^2  \sum_{i=0}^d V_{d-i}(W) \spm[d+i]\,,\\
	\int_{\XX} f_\beta^{(3)}(\by)^2\mu(\diff \by) &\le C (1+a^d) \,\spm^{-\frac{\tau+1}{d+\tau+1}-1} (1+\spm[7d]\spm^{-7})^4
	 \sum_{i=0}^d V_{d-i}(W) \spm[d+i].
	\end{align*}
Thus, there exists a constant $C$ depending only on $d$ and $\tau$
such that for $i \in \{1,2,3\}$,
	\begin{equation}\label{eq:est3}
	\int_{\XX} f_\beta^{(i)}(\by)^2\mu(\diff \by) \le C  (1+a^d) \, \spm^{-\frac{\tau+1}{d+\tau+1}-1} (1+\spm[7d]\spm^{-7})^4  \sum_{i=0}^d V_{d-i}(W) \spm[d+i].
\end{equation}
	
        Plugging \eqref{eq:est1}, \eqref{eq:est2} and \eqref{eq:est3} in \eqref{eq:Wass} and
        \eqref{eq:Kol}, and using Proposition~\ref{prop:var} to lower
        bound the variance yield the desired bounds.
\end{proof}

\begin{proof}[Proof of Corollary~\ref{cor:scale2}:]
  Define the Poisson process $\eta^{(s)}$ with intensity measure
  $\mu^{(s)}:=\lambda \otimes\theta\otimes\nu^{(s)}$, where
  $\nu^{(s)}(A):=\nu(s^{-1/d}A)$ for all Borel sets $A$. It is
  straightforward to see that the set of locations of exposed points
  of $\eta_s$ has the same distribution as of those of $\eta^{(s)}$,
  multiplied by $s^{-1/d}$, i.e., the set
  $\{x : \bx \in \eta_s \text{ is exposed}\}$ coincides in
  distribution with $\{s^{-1/d}x : \bx \in \eta^{(s)} \text{ is
    exposed}\}$. Hence, the functional $F(\eta_s)$ has the same
  distribution as $F_{s}(\eta^{(s)})$, where $F_{s}$ is defined as
  in \eqref{eq:F} for the weight function
  $$h(\bx)
  = h_{1,s}(x) h_2(t_x) =\ind_{x \in W_s} \ind_{t_x <a}\,,$$
 where $W_s:=s^{1/d} W$. It is easy
  to check that for $k \in \N$, the $k$-th moment of $\nu^{(s)}$ is
  given by $\nu^{(s)}_k=s^{k/d}\nu_k$ and $\lambda(W_s)=s \lambda(W)$. We also have
  $$V_{\nu^{(s)}}(W_s)=\sum_{k=0}^d V_{d-i}(s^{1/d} W) \nu_{d+i}^{(s)} = \sum_{k=0}^d s^{\frac{d-i}{d}} V_{d-i}(W) s^{\frac{d+i}{d}}\nu_{d+i }=s^2 V_\nu (W).
  $$
  Finally noticing that
$$
l_{a,\tau}(\spm^{(s)})=\gamma\left(\frac{\tau+1}{d+\tau+1},a^{d+\tau+1} s\spm \right) (s\spm)^{-\frac{\tau+1}{d+\tau+1}} \ge \gamma\left(\frac{\tau+1}{d+\tau+1},a^{d+\tau+1} \spm \right) (s\spm)^{-\frac{\tau+1}{d+\tau+1}} 
$$
for $s\geq 1$, the result follows directly from
Theorem~\ref{thm:speed}. The optimality of the Kolmogorov bound
follows arguing as in the proof of Theorem~\ref{thm:scale}.
\end{proof}


\section*{Acknowledgment}

The authors would like to thank Matthias Schulte for raising the idea
of extending the central limit theorem to functionals of the
birth-growth model with random growth speed. They are grateful for the
referees for pointing out connections to other papers and encouraging
exploring an applied motivation for our model. IM and RT have been
supported by the Swiss National Science Foundation Grant
No. 200021\_175584. A major part of the work was done when CB was employed by the University of Luxembourg.

\bibliography{BMT}
\bibliographystyle{abbrv}
    
\end{document}